\documentclass[11pt, a4paper]{article}
\usepackage{amsfonts}
\usepackage{amsmath}

\usepackage{amsthm,amsfonts,color}
\usepackage{amssymb}
\usepackage{graphicx}
\usepackage{setspace}

\newtheorem{theorem}{Theorem}[section]
\newtheorem{corollary}{Corollary}[section]
\newtheorem{conjecture}{Conjecture}
\newtheorem{lemma}{Lemma}[section]

\newtheorem{claim}{Claim}
\newtheorem{definition}{Definition}
\newtheorem{proposition}{Proposition}[section]

\newcommand{\rem}{{\rm rem}}

\usepackage{geometry}
\geometry{left=2.2cm,right=2.2cm,top=2.5cm,bottom=2.5cm}

\begin{document}
\begin{spacing}{1.2}

\title{Uniform sets in a family with restricted intersections
\thanks{The first author is supported by NSFC (No. 11601430) and China Postdoctoral Science Foundation (No. 2016M590969);
the second author is supported by  NSFC (No. 11601429);
and the fourth author is supported by NSFC (Nos. 11571135 and 11671320).}}
\author{\quad Yandong Bai $^{a,b}$,
\quad Binlong Li $^{a,b}$,
\quad Jiuqiang Liu $^{b,c,}$\thanks{Corresponding author.
E-mail addresses:
bai@nwpu.edu.cn (Y. Bai),
binlongli@nwpu.edu.cn (B. Li),
jliu@emich.edu (J. Liu),
sgzhang@nwpu.edu.cn (S. Zhang).
},
\quad Shenggui Zhang $^{a,b}$\\[2mm]
\small $^{a}$ Department of Applied Mathematics, Northwestern Polytechnical University, \\
\small Xi'an, Shaanxi 710129, China\\
\small $^{b}$ Xi'an-Budapest Joint Research Center for Combinatorics, Northwestern Polytechnical University, \\
\small Xi'an, Shaanxi 710129, China\\
\small $^{c}$ Department of Mathematics, Eastern Michigan University, \\
\small Ypsilanti, MI 48197, USA
}
\date{\today}
\maketitle

\begin{abstract}
Let $\mathcal{F}$ be a family of subsets of $[n]=\{1,\ldots,n\}$ and
let $L$ be a set of nonnegative integers.
The family $\mathcal{F}$ is \emph{$L$-intersecting} if
$|F\cap F'|\in L$ for every two distinct members $F,F'\in\mathcal{F}$;
and $\mathcal{F}$ is $k$-uniform if all its members have the same size $k$.
A large variety of problems and results in extremal set theory
concern on $k$-uniform $L$-intersecting families. Many attentions
are paid to finding the maximum size of a family among all
$k$-uniform $L$-intersecting families with prescribed $n,k$ and $L$.
In this paper, from another point of view, we propose and
investigate the problem of estimating the maximum size of a member
in a family among all uniform $L$-intersecting families with size
$m$, here $n,m$ and $L$ are prescribed. Our results aim to find out
more precise relations of $n,m,k$ and $L$.

\medskip
\noindent
{\bf Keywords:} uniform intersecting family; Fisher's inequality; Erd\H{o}s-Ko-Rado theorem; extremal set theory
\smallskip
\end{abstract}

\section{Introduction}\label{section: introduction}

For a positive integer $n$, we set $[n]=\{1,\ldots,n\}$. A family
$\mathcal{F}$ of subsets of $[n]$ is an $m$-\emph{family} if
$\mathcal{F}$ has $m$ members; $\mathcal{F}$ is $k$-\emph{uniform}
if every member of $\mathcal{F}$ has size $k$; and for a set $L$ of
nonnegative integers, $\mathcal{F}$ is $L$-\emph{intersecting} if
$|F\cap F'|\in L$ for every two distinct members $F$ and $F'$ in
$\mathcal{F}$. Specially, if $L$ consists of all positive integers
(i.e., $L=\mathbb{N}^*$), then an $L$-intersecting family is also
called an \emph{intersecting} family. We say $\mathcal{F}$ is
\emph{uniform} if $\mathcal{F}$ is $k$-uniform for some $k$.

Extremal set theory studies various types of intersecting families,
see, e.g.,
\cite{AHS1972,BF1980,BFKS2001,CL2009,DEGKM1997,DEF1978,DF1983,FS2004,Isbell1959,Majumdar1953,RCW1975,Ryser1973,Snevily1994,Snevily1999,Snevily2003,Talbot2004}.
Among them, a large variety of problems and results concern on
$k$-uniform $L$-intersecting families. Many attentions are paid to
finding the maximum size of a family among all $k$-uniform
$L$-intersecting families with prescribed $n,k$ and $L$, see, e.g.,
\cite{Bose1949,EKR1961,Fisher1940,Frankl1984,RCW1975,RT2006,Snevily1994,Snevily1999}.
The first important result of this type is Fisher's inequality.

\begin{theorem}[Fisher's inequality, see
\cite{Bose1949,Fisher1940}]\label{ThFisher} Let $\mathcal{F}$ be a
$k$-uniform $\{l\}$-intersecting $m$-family of distinct subsets of
$[n]$, where $l\geqslant 1$ is an integer. Then $m\leqslant n$.
\end{theorem}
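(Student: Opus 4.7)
The plan is to use the standard linear-algebra argument (the ``polynomial/incidence method'' in its simplest form). Let $\mathcal{F}=\{F_1,\ldots,F_m\}$ and assign to each $F_i$ its characteristic vector $v_i\in\{0,1\}^n$, so that $v_i\cdot v_j=|F_i\cap F_j|$. Stack these vectors as the rows of an $m\times n$ real matrix $A$. Then the Gram matrix $M=AA^{T}$ has diagonal entries $|F_i|=k$ and off-diagonal entries $|F_i\cap F_j|=l$, that is $M=(k-l)I_m+lJ_m$, where $J_m$ is the all-ones matrix of order $m$. The strategy is to show that $M$ is nonsingular; then $m=\mathrm{rank}(M)\leqslant \mathrm{rank}(A)\leqslant n$, which is exactly the desired inequality.

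To establish the nonsingularity, I would compute the spectrum of $M$ directly. Since $J_m$ has eigenvalue $m$ on the all-ones vector and eigenvalue $0$ on its orthogonal complement, the eigenvalues of $M$ are $k+(m-1)l$ with multiplicity $1$ and $k-l$ with multiplicity $m-1$. The first eigenvalue is strictly positive because $k\geqslant 1$ and $l\geqslant 1$, so everything hinges on ruling out the case $k=l$.

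The edge case $k=l$ is handled by a purely combinatorial observation: if $|F_i\cap F_j|=l=k=|F_i|$ for some distinct $F_i,F_j$, then $F_i\subseteq F_j$, and equality of sizes forces $F_i=F_j$, contradicting distinctness. Hence either $m=1$ (in which case $m\leqslant n$ is trivial) or $k>l$, and in the latter case $M$ is positive definite, completing the proof.

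The genuinely delicate point is the $k=l$ degeneration: the linear-algebra inequality $\mathrm{rank}(AA^T)\leqslant \mathrm{rank}(A)$ by itself does not give the bound when $M$ is singular, so the combinatorial reduction above is essential. Everything else is routine: choosing the right matrix, reading off its eigenvalues from the structure $\alpha I+\beta J$, and invoking $\mathrm{rank}(A)\leqslant n$ from $A$ having only $n$ columns.
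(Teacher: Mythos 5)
The paper does not prove this statement; it is quoted as a classical result with citations to Fisher and Bose, so there is no in-paper argument to compare against. Your proof is the standard linear-algebra argument (essentially Bose's original proof of the uniform Fisher inequality) and it is correct: the Gram matrix $M=AA^{T}=(k-l)I_m+lJ_m$ has eigenvalues $k+(m-1)l$ and $k-l$, and you correctly identify and dispose of the only degenerate case, namely $k=l$, via the observation that $|F_i\cap F_j|=|F_i|=|F_j|$ forces $F_i=F_j$, so that for $m\geqslant 2$ one has $k>l$ and $M$ is positive definite, giving $m=\mathrm{rank}(M)\leqslant\mathrm{rank}(A)\leqslant n$.
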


The intersection set $L$ in the above theorem consists of one positive integer.
For $L$ consists of more than one integer,
in 1961, Erd\H{o}s, Ko and Rado \cite{EKR1961} proved the following classical result,
which is now famous as EKR theorem and has a remarkable number of generalizations and analogues during the last half century,
see, e.g., \cite{ABS1991,DF1983,Frankl1984,FF1983,FT2016,FW1981,GS2002,HZ2017,LY2014,LZLZ2016,Mubayi2007,Mubayi2006,Pyber1986,Snevily2003,Talbot2004}.

\begin{theorem}[Erd\H{o}s, Ko and Rado \cite{EKR1961}]\label{EKR theorem}
Let $\mathcal{F}$ be a $k$-uniform intersecting $m$-family of distinct subsets of $[n]$ with $1\leqslant k\leqslant n/2$.
Then
$$
m\leqslant \binom{n-1}{k-1},
$$
and for $1\leqslant k<n/2$ the equality holds only if $\mathcal{F}$ consists of all $k$-subsets with a common element.
\end{theorem}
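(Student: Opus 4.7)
My plan is to use Katona's cyclic permutation method, which yields a clean proof of the upper bound together with the uniqueness statement (the latter requiring additional work). Fix a cyclic ordering $\sigma$ of $[n]$ on a circle, and call a $k$-subset $F$ an \emph{arc of $\sigma$} if its elements appear consecutively in $\sigma$. The key combinatorial lemma is: for any cyclic ordering $\sigma$ and any $k\leqslant n/2$, at most $k$ arcs of $\sigma$ can pairwise intersect. To prove it, fix one arc $A$ starting at position $a$; any arc intersecting $A$ must start at one of the $2k-1$ positions $a-k+1,\ldots,a+k-1$ (indices modulo $n$). Among the $k-1$ pairs $\{a-i,a+k-i\}$ for $i=1,\ldots,k-1$, the two arcs at these positions are disjoint because $n\geqslant 2k$, so at most one arc from each pair can occur in a pairwise intersecting family; together with $A$ itself, this gives at most $k$ arcs.

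The bound on $m$ then follows by double counting pairs $(\sigma,F)$ with $F\in\mathcal{F}$ an arc of $\sigma$. There are $(n-1)!$ cyclic orderings, and each $k$-subset is an arc of exactly $k!(n-k)!$ of them, while the lemma bounds each $\sigma$'s contribution by $k$. Hence
\[
m\cdot k!\,(n-k)! \;\leqslant\; (n-1)!\cdot k,
\]
which rearranges to $m\leqslant\binom{n-1}{k-1}$.

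For the uniqueness statement when $1\leqslant k<n/2$, I would pass to a shifted (compressed) family $\mathcal{F}^*$ via the standard shifts $S_{ij}$, which preserve both $|\mathcal{F}|$ and the intersecting property. For a shifted extremal intersecting family one can show, by a contradiction argument exploiting $k<n/2$ to exhibit a member disjoint from any hypothetical $F\in\mathcal{F}^*$ avoiding the element $1$, that every member of $\mathcal{F}^*$ contains $1$; combined with $|\mathcal{F}^*|=\binom{n-1}{k-1}$, this forces $\mathcal{F}^*$ to be the whole star through $1$. The delicate remaining step is to transfer this conclusion back to $\mathcal{F}$, since shifting is not literally reversible; this is where I expect the main obstacle to lie. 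A cleaner alternative is to refine the cyclic lemma itself by classifying the $k$-tuples of arcs attaining equality and showing that the double-counting bound is tight only when all members of $\mathcal{F}$ share a common element.
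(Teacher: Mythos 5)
This theorem is quoted in the paper as a classical result with a citation to Erd\H{o}s, Ko and Rado \cite{EKR1961}; the paper supplies no proof of its own, so there is nothing to compare your argument against except the standard literature. Your proof of the inequality $m\leqslant\binom{n-1}{k-1}$ is Katona's cyclic permutation argument, and it is correct and complete as written: the pairing of starting positions $a-i$ and $a+k-i$ does produce disjoint arcs precisely because the combined span is $2k\leqslant n$ positions, the count of $k!(n-k)!$ cyclic orderings containing a given $k$-set as an arc is right, and the double count closes the argument. This is a legitimate, self-contained proof of the bound, which is more than the paper offers.

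The uniqueness statement for $1\leqslant k<n/2$, however, is not proved in your write-up, and you correctly identify where the gap is. Shifting preserves the intersecting property and the cardinality, and a shifted extremal family is easily shown to be the star through $1$; but the conclusion ``$\mathcal{F}^*$ is a star'' does not transfer back to $\mathcal{F}$ without further work, since the shift operations $S_{ij}$ are many-to-one and a non-star family can in principle compress to a star. The standard ways to close this are either (i) to analyze the case of equality in Katona's lemma itself --- showing that if some cyclic ordering carries exactly $k$ arcs of $\mathcal{F}$ that pairwise intersect, those $k$ arcs share a common point, and then propagating this common point across orderings, which requires $k<n/2$ strictly --- or (ii) to invoke the Hilton--Milner theorem, which bounds a non-star intersecting family by $\binom{n-1}{k-1}-\binom{n-k-1}{k-1}+1<\binom{n-1}{k-1}$ for $k<n/2$. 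Either route requires a genuine additional argument that your proposal only gestures at, so as it stands the extremal characterization remains unproved.
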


For intersection sets consisting of $s$ general nonnegative/positive integers,
the following two results have been proved.

\begin{theorem}[Ray-Chaudhuri and Wilson \cite{RCW1975}]\label{RCW theorem}
Let $\mathcal{F}$ be a $k$-uniform $L$-intersecting $m$-family of
subsets of $[n]$, where $L=\{l_{1},\ldots,l_{s}\}$ is a set of $s$
nonnegative integers. If $\max\{l_{1},\ldots,l_{s}\}\leqslant k-1$,
then
$$
m\leqslant \binom{n}{s}.
$$
\end{theorem}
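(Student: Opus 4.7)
The natural approach is the polynomial / linear algebra method that has been the workhorse for Fisher- and Frankl-Wilson-type bounds. I would set up characteristic vectors, construct a polynomial for each member of $\mathcal{F}$ which ``isolates'' it on the set of characteristic vectors, and then argue that these polynomials are linearly independent inside a vector space of dimension $\binom{n}{s}$.

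Concretely, enumerate $\mathcal{F}=\{F_1,\ldots,F_m\}$ and let $v_i\in\{0,1\}^n$ be the characteristic vector of $F_i$. For each $i$, put
\[
P_i(x)=\prod_{j=1}^{s}\bigl(\langle x,v_i\rangle-l_j\bigr),
\]
a polynomial in $x=(x_1,\ldots,x_n)$ of total degree $s$. Evaluating at a characteristic vector $v_{i'}$ gives $P_i(v_{i'})=\prod_j(|F_i\cap F_{i'}|-l_j)$, which vanishes whenever $i\neq i'$ because $|F_i\cap F_{i'}|\in L$, and equals $\prod_j(k-l_j)$ when $i=i'$; the hypothesis $\max_j l_j\leqslant k-1$ ensures this last product is nonzero. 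A standard ``triangular'' argument then shows $P_1,\ldots,P_m$ are linearly independent: any dependence $\sum c_i P_i\equiv 0$ evaluated at $v_{i'}$ forces $c_{i'}=0$.

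To land on $\binom{n}{s}$ rather than the crude bound $\binom{n+s}{s}$, I would carry out two reductions that use only values on $\{v_1,\ldots,v_m\}$. First, since $x_r^2=x_r$ on $\{0,1\}$-vectors, replace each $P_i$ by its multilinearization $\widetilde P_i$; this leaves the values $\widetilde P_i(v_{i'})=P_i(v_{i'})$ unchanged, so the $\widetilde P_i$ are still independent, and each lives in the space $V_{\le s}$ of multilinear polynomials of degree at most $s$, of dimension $\sum_{t=0}^{s}\binom{n}{t}$. Second, and this is where the $k$-uniformity enters critically, use the identity $\sum_r x_r=k$, which holds on every $v_{i'}$: any monomial $x_{r_1}\cdots x_{r_t}$ with $t<s$ can be multiplied by $\bigl(k^{-1}\sum_r x_r\bigr)^{s-t}=1$ and then multilinearized, producing a multilinear polynomial of degree exactly $s$ that agrees with the original on all $v_{i'}$. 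Thus every $\widetilde P_i$ has a representative in the space $V_s$ of degree-exactly-$s$ multilinear polynomials (dimension $\binom{n}{s}$) that still takes the required values on characteristic vectors, and linear independence is preserved. Comparing $m$ to $\dim V_s$ yields $m\leqslant\binom{n}{s}$.

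The main obstacle is the second reduction: one must be careful that ``pushing'' lower-degree monomials up to degree $s$ via the $\sum x_r=k$ identity preserves both multilinearity and the evaluations $\widetilde P_i(v_{i'})$, since only then does linear independence transfer from $V_{\le s}$ to $V_s$. The other steps---building $P_i$, checking its ``Kronecker-delta'' behavior, and multilinearizing---are routine once one sees that the hypothesis $\max_j l_j\leqslant k-1$ is exactly what forces $P_i(v_i)\neq 0$.
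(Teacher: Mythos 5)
The paper quotes this theorem from Ray-Chaudhuri and Wilson without proof, so there is no internal argument to compare against; your proposal has to stand on its own, and it essentially does. What you describe is the standard Alon--Babai--Suzuki polynomial-method proof of the uniform bound (the original proof instead bounds the rank of inclusion matrices of $s$-subsets versus $k$-subsets; your second reduction is really the same fact in disguise, since the span of the functions $x_S|_{\{v_1,\dots,v_m\}}$ with $|S|=s$ is the row space of such a matrix). The construction of the $P_i$, the diagonal evaluation matrix with nonzero diagonal entries $\prod_j(k-l_j)$, and the multilinearization are all correct. The one step you state too loosely is exactly the one you flag as the obstacle: multiplying a monomial $x_T$ with $|T|=t<s$ by $\bigl(k^{-1}\sum_r x_r\bigr)^{s-t}$ and then multilinearizing does \emph{not} yield a homogeneous degree-$s$ polynomial in one shot, because multilinearizing $x_T\cdot x_r$ for $r\in T$ regenerates the degree-$t$ term $x_T$. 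The clean repair is the identity, valid on every characteristic vector of a $k$-set, $(k-t)\,x_T=\sum_{r\notin T}x_{T\cup\{r\}}$; since $L\subseteq\{0,\dots,k-1\}$ forces $s\leqslant k$, the coefficient $k-t$ is nonzero for $t\leqslant s-1$, and iterating (equivalently, using $x_T=\binom{k-t}{s-t}^{-1}\sum_{S\supseteq T,\,|S|=s}x_S$ on such vectors) expresses every monomial of degree at most $s$, as a function on $\{v_1,\dots,v_m\}$, as a combination of degree-$s$ multilinear monomials. With that repair the restrictions of the $\widetilde P_i$ lie in a space of dimension at most $\binom{n}{s}$ while remaining linearly independent, and $m\leqslant\binom{n}{s}$ follows. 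An equally standard alternative that avoids the push-up is to keep the $\widetilde P_i$ in the space of multilinear polynomials of degree at most $s$ and adjoin the multilinearizations of $x_T\cdot\bigl(\sum_r x_r-k\bigr)$ for all $|T|\leqslant s-1$; the combined family is independent and the dimension count gives the same bound.
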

\begin{theorem}[Heged\H{u}s \cite{Hegedus2015}]
Let $\mathcal{F}$ be a $k$-uniform $L$-intersecting $m$-family of
subsets of $[n]$, where $L=\{l_{1},\ldots,l_{s}\}$ is a set of $s$
positive integers with $l_{1}<l_{2}<\cdots <l_{s}$. If $n\geqslant
\binom{k^{2}}{l_{1}+1}s+l_{1}$, then
$$
m\leqslant \binom{n-l_{1}}{s}.
$$
\end{theorem}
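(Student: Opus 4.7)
\medskip
\noindent\textbf{Proof plan.}

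The plan is to reduce to Theorem~\ref{RCW theorem} on a smaller ground set. The central structural claim I aim to establish is that, under the hypothesis $n\geqslant\binom{k^{2}}{l_{1}+1}s+l_{1}$, there exists an $l_{1}$-subset $T\subseteq[n]$ with $T\subseteq F$ for every $F\in\mathcal{F}$. Granted this, the shifted family $\mathcal{F}':=\{F\setminus T:F\in\mathcal{F}\}$ is $(k-l_{1})$-uniform on the ground set $[n]\setminus T$ of size $n-l_{1}$, and the identity $|(F\setminus T)\cap(F'\setminus T)|=|F\cap F'|-l_{1}$ shows that $\mathcal{F}'$ is $L'$-intersecting with $L'=\{0,l_{2}-l_{1},\ldots,l_{s}-l_{1}\}$. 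This $L'$ has exactly $s$ distinct nonnegative integers and $\max L'=l_{s}-l_{1}\leqslant k-l_{1}-1$ (since distinct members of $\mathcal{F}$ intersect in fewer than $k$ elements), so Theorem~\ref{RCW theorem} at once delivers $m=|\mathcal{F}'|\leqslant\binom{n-l_{1}}{s}$.

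To set up the structural claim, first observe that if no two members of $\mathcal{F}$ meet in exactly $l_{1}$ elements, then $\mathcal{F}$ is $\{l_{2},\ldots,l_{s}\}$-intersecting and Theorem~\ref{RCW theorem} already yields $m\leqslant\binom{n}{s-1}$; a routine estimate confirms $\binom{n}{s-1}\leqslant\binom{n-l_{1}}{s}$ whenever $n$ is as large as the hypothesis demands, handling this case. Hence I may fix $F_{1},F_{2}\in\mathcal{F}$ with $T:=F_{1}\cap F_{2}$ of size precisely $l_{1}$, and the remaining task is to force every other member of $\mathcal{F}$ to contain~$T$.

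The main obstacle is this last step. My approach is a sunflower-style argument: extend $\{F_{1},F_{2}\}$ to a maximal collection $F_{1},\ldots,F_{r}\in\mathcal{F}$ whose pairwise intersections are all equal to $T$. If some $F\in\mathcal{F}$ were to satisfy $T\not\subseteq F$, then $|F\cap T|<l_{1}$ combined with $|F\cap F_{i}|\in L$ would force $|F\cap(F_{i}\setminus T)|\geqslant 1$ and would restrict each such value to an $s$-element shift of $L$. The quantity $\binom{k^{2}}{l_{1}+1}$ is the natural ceiling on the number of $(l_{1}+1)$-subsets available in the small union $F\cup F_{i}$ (of size at most $2k-l_{1}<k^{2}$), and matching the factor $s$ to the shifted intersection set together with the offset $l_{1}$ to the size of the core is what makes the threshold $n\geqslant\binom{k^{2}}{l_{1}+1}s+l_{1}$ exactly sufficient for the counting to overwhelm the number of ``bad'' configurations and produce a contradiction. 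Carrying out this counting precisely—identifying which $(l_{1}+1)$-subsets are forced to appear, and invoking the sharp threshold on $n$—is the fine technical point I expect to require the most care.
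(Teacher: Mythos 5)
The paper only cites this result from Heged\H{u}s's preprint and gives no proof of its own, so your proposal has to stand on its own merits. It does not: the central structural claim---that under the hypothesis $n\geqslant\binom{k^{2}}{l_{1}+1}s+l_{1}$ every member of $\mathcal{F}$ contains a common $l_{1}$-element set $T$---is false. The hypothesis is a lower bound on the size of the \emph{ground set} $n$, not on the size $m$ of the family, and enlarging the ground set imposes no structure on a small family. Concretely, take $k=3$, $L=\{1\}$ (so $s=1$, $l_{1}=1$), $n=37\geqslant\binom{9}{2}+1$, and $\mathcal{F}=\{\{1,2,3\},\{1,4,5\},\{2,4,6\}\}$: all pairwise intersections have size exactly $1$, yet the three pairwise intersections are $\{1\}$, $\{2\}$, $\{4\}$ and there is no common element. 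Your argument would fix $T=F_{1}\cap F_{2}=\{1\}$ and then try to force $\{1\}\subseteq F_{3}$, which is simply not the case. The theorem's conclusion ($3\leqslant\binom{36}{1}$) of course still holds, but not by your route.

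The underlying confusion is in the roles of $n$ and $m$. A sunflower with core $T$ of size $l_{1}$ and more than $k$ pairwise disjoint petals does force every remaining member to contain $T$ (a set $F$ with $T\not\subseteq F$ would have to meet each of the $>k$ disjoint petals, contradicting $|F|=k$), but producing such a sunflower requires $m$ to be large as a function of $k$ and $s$ --- it has nothing to do with $n$. The correct architecture, and the reason the threshold on $n$ appears, is a case split on $m$: when $m$ is at most roughly $\binom{k^{2}}{l_{1}+1}s$ (a bound depending only on $k$ and $s$), the hypothesis on $n$ gives $m\leqslant n-l_{1}\leqslant\binom{n-l_{1}}{s}$ directly; when $m$ exceeds that bound, a Deza--Erd\H{o}s--Frankl-type argument yields the common $l_{1}$-core, after which your reduction to Theorem~\ref{RCW theorem} on the ground set $[n]\setminus T$ (that part of your write-up is fine, including the verification that $L'=\{0,l_{2}-l_{1},\ldots,l_{s}-l_{1}\}$ has $s$ elements with $\max L'\leqslant (k-l_{1})-1$) finishes the proof. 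Your proposed counting of $(l_{1}+1)$-subsets of $F\cup F_{i}$ against $n$ cannot ``overwhelm'' anything, because the number of bad members is bounded by $m$, which is not controlled by $n$.
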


In the above results on $k$-uniform $L$-intersecting $m$-families of
subsets of $[n]$, the authors fix $n,k,L$ and then consider how
large the size $m$ of a family could be. In this paper we
investigate this type problem in another direction. We make attempt
to estimate what is the maximum size of a member in a family among
all uniform $L$-intersecting $m$-families of subsets of $[n]$ with
prescribed $n,m$ and $L$. For a better presentation, we assume in
the following that the nonnegative integers in the considered
intersection set $L$ satisfy $l_{1}<l_{2}<\cdots <l_{s}$. Since
every two distinct members in $\mathcal{F}$ has less than $n$ common elements,
we will also assume that $l_{s}<n$. Define
\begin{equation}
\textrm{$\kappa_{_{L}}(n,m)=\max\{k$: there exits a $k$-uniform $L$-intersecting $m$-family of subsets of $[n]\}$},
\end{equation}
\begin{equation}
\textrm{$\mu_{_{L}}(n,k)=\max\{m$: there exits a $k$-uniform $L$-intersecting $m$-family of subsets of $[n]\}$}.
\end{equation}
We need to remark that in the definitions of $\kappa_{_{L}}(n,m)$ and
$\mu_{_{L}}(n,k)$ the subsets in the family are required to be distinct.
If there exists no $k$-uniform $L$-intersecting $m$-family of subsets of $[n]$ for any $k$,
then we define $\kappa_{_{L}}(n,m)=-\infty$.
Note that if $n\geqslant l_{s}+m$,
then $\kappa_{_{L}}(n,m)\geqslant l_{s}+1$,
as we can construct an $l_s$-uniform $L$-intersecting $m$-family $\{F_{1},\ldots,F_{m}\}$
in which $F_{i}=\{i\}\cup \{m+1,\ldots,m+l_{s}\}$ for each $1\leqslant i\leqslant m$.
One can also see from the above definitions that
$$
\kappa_{_L}(n,m)=\max\{k: \mu_{_L}(n,k)\geqslant m\}.
$$

Alternatively, we can restate our problem as follows.
Let $\mathcal{H}$ be a uniform hypergraph with $n$ vertices and $m$
hyperedges such that the intersection of every two hyperedges has size in $L$.
For given $n,m$ and $L$,
we want to know what is the maximum size of a hyperedge among all uniform hypergraphs
satisfying the above conditions.

We now present an extension concept of the $L$-intersecting families.
For an integer $t\geqslant 2$, a family $\mathcal{F}$ is
\emph{$t$-wise $L$-intersecting} if the intersection of every $t$
members in $\mathcal{F}$ has size in $L$. So an
$L$-intersecting family is 2-wise $L$-intersecting. We define
\begin{equation}
\textrm{$\kappa_{_{L}}^{t}(n,m)=\max\{k$: there exits a $k$-uniform
$t$-wise $L$-intersecting $m$-family of subsets of $[n]\}$}.
\end{equation}

The rest of this paper is organized as follows.
In the next section we study (2-wise) $L$-intersections for $L$ consisting of one integer.
We obtain exact values of $\kappa_{_{\{l\}}}(n,m)$ for
$1\leqslant m\leqslant 4$, and afterwards, we present both a lower
bound and an upper bound of $\kappa_{_{\{l\}}}(n,m)$ for general
$m$. In Section \ref{section: general intersection set} we consider
$L$-intersections for $L=\{0,1,\ldots,l\}$. In particular, we show
that
$$
\lim_{n\rightarrow \infty}\frac{\kappa_{_{\{0,1\}}}(n,n)}{\sqrt{n}}=1.
$$
In Section \ref{section: t-wise} we consider $t$-wise
$L$-intersections for general $t\geqslant 2$ and
$L=\{l_{1},\ldots,l_{s}\}$, we obtain an exact value of
$\kappa_{_L}^{t}(n,m)$ for large $n$. Section \ref{section: proofs
for singleton L} is devoted to the proofs of the main results in
Sections \ref{section: singleton} and \ref{section: general
intersection set}. In Section \ref{section: concluding} we propose
a problem for further research.

\section{$L$-intersecting families with $L=\{l\}$}\label{section: singleton}

In this section we deal with the case that $L$ is a singleton $\{l\}$,
where $l\geqslant 0$ and $n\geqslant l+m$.
For convenience,
we will write $\kappa_{_l}(n,m)$ for $\kappa_{_{\{l\}}}(n,m)$ in the following.
It is easy to check that
$$
\kappa_{_0}(n,m)=\left\lfloor\frac{n}{m}\right\rfloor \mbox{ for all } m\geqslant 1 \mbox{ and } n\geqslant m.
$$
So from now on we assume that $l\geqslant 1$.

For the first case $m=1$,
it is not difficult to see that the ground set $[n]$ forms a singleton family of maximum member size.

\begin{proposition}
$\kappa_{_{l}}(n,1)=n$.
\end{proposition}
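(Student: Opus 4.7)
The plan is to verify the proposition directly from the definitions, since the case $m=1$ is essentially trivial: the $L$-intersecting condition imposes no restriction on a family with only one member, so we are only optimizing the size of a single subset of $[n]$.

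First I would observe that the condition ``$|F \cap F'| \in L$ for every two distinct members $F, F' \in \mathcal{F}$'' is vacuously satisfied when $|\mathcal{F}| = 1$, because there are no two distinct members to compare. Hence any singleton family $\{F\}$ with $F \subseteq [n]$ and $|F| = k$ is $\{l\}$-intersecting and $k$-uniform, showing that $\kappa_{l}(n,1) \geqslant k$ for every $k \leqslant n$. Taking $F = [n]$ gives a $1$-family that is $n$-uniform, so $\kappa_{l}(n,1) \geqslant n$.

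For the upper bound, any $k$-uniform family of subsets of $[n]$ necessarily has $k \leqslant n$, since a subset of $[n]$ has cardinality at most $n$. Combining the two bounds yields $\kappa_{l}(n,1) = n$.

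There is no real obstacle here; the only subtlety worth noting explicitly is that, although the definition of $\kappa_{l}(n,m)$ assumes $n \geqslant l + m$ and uses the $L$-intersection property, the case $m=1$ trivializes that property, so the value of $l$ plays no role in the statement.
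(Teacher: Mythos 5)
Your proof is correct and matches the paper's reasoning: the paper simply remarks that the ground set $[n]$ itself forms a singleton family of maximum member size, which is exactly your observation that the intersecting condition is vacuous for $m=1$ and that $[n]$ attains the trivial upper bound $k\leqslant n$.
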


We will further obtain exact values of $\kappa_{_{l}}(n,m)$ for
$2\leqslant m\leqslant 4$. Despite that the first two results
$\kappa_{_{l}}(n,1)=n$ and
$\kappa_{_l}(n,2)=\left\lfloor(n+l)/2\right\rfloor$ are not
difficult to verify, the proofs for the cases $m=3$ and $m=4$ are
somehow complicated.

\begin{theorem}\label{thm: m=2}
$\kappa_{_l}(n,2)=\left\lfloor\dfrac{n+l}{2}\right\rfloor$ for all $n\geqslant
l+2$.
\end{theorem}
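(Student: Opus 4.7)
The plan is to prove the equality by separately establishing the matching upper and lower bounds; both directions are elementary, so the write-up will essentially be two short paragraphs.

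For the upper bound, I would start from any $k$-uniform $\{l\}$-intersecting $2$-family $\{F_1,F_2\}$ of distinct subsets of $[n]$ and apply inclusion–exclusion:
\begin{equation*}
n \;\geqslant\; |F_1\cup F_2| \;=\; |F_1|+|F_2|-|F_1\cap F_2| \;=\; 2k-l.
\end{equation*}
Rearranging yields $k\leqslant (n+l)/2$, and since $k$ is an integer this gives $k\leqslant \lfloor (n+l)/2\rfloor$. Thus $\kappa_{_l}(n,2)\leqslant \lfloor (n+l)/2\rfloor$.

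For the lower bound, set $k=\lfloor(n+l)/2\rfloor$ and exhibit an explicit family. Because $2k-l\leqslant n$, I can partition a subset of $[n]$ of size $2k-l$ into three blocks of sizes $l$, $k-l$, and $k-l$; concretely, let $A=\{1,\ldots,l\}$, $B=\{l+1,\ldots,k\}$, and $C=\{k+1,\ldots,2k-l\}$, and define $F_1=A\cup B$ and $F_2=A\cup C$. Then $|F_1|=|F_2|=k$ and $F_1\cap F_2=A$, so $|F_1\cap F_2|=l$. This produces a $k$-uniform $\{l\}$-intersecting family of size $2$, provided the two sets are genuinely distinct.

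The only point requiring a moment of care — and the closest thing to an obstacle — is ensuring $F_1\neq F_2$, i.e., $B$ and $C$ are nonempty, i.e., $k>l$. This is exactly where the hypothesis $n\geqslant l+2$ is used: it gives $k=\lfloor(n+l)/2\rfloor\geqslant \lfloor(2l+2)/2\rfloor=l+1>l$, so $B$ and $C$ are indeed nonempty and $F_1\neq F_2$. Combining the two bounds yields $\kappa_{_l}(n,2)=\lfloor(n+l)/2\rfloor$, as claimed.
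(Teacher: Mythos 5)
Your proof is correct, but it takes a genuinely different and more elementary route than the paper. You prove the upper bound by inclusion--exclusion ($n\geqslant|F_1\cup F_2|=2k-l$) and the lower bound by an explicit three-block construction, with the hypothesis $n\geqslant l+2$ used exactly where it is needed, namely to guarantee $k>l$ so that the two sets are distinct. The paper instead proves this theorem inside its ``assignment/extender'' framework: it encodes the family as a function $\phi$ on $2^{[2]}$ recording how many ground elements lie in each intersection pattern, exhibits the assignment $\phi(\emptyset)=\rem(n+l,2)$, $\phi(\{1\})=\phi(\{2\})=\lfloor(n-l)/2\rfloor$, $\phi(\{1,2\})=l$, and certifies its optimality by showing that the unique family of positive extenders for $m=2$ is incompatible with it. For $m=2$ this machinery is heavier than necessary --- your two paragraphs are shorter and more transparent --- but the paper's point in using it here is to introduce the framework that carries the proofs for $m=3$ and $m=4$, where a direct construction-plus-counting argument of your kind would require substantially more case analysis. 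In short: your argument buys simplicity and self-containedness for this one case; the paper's buys uniformity across Theorems \ref{thm: m=2}--\ref{thm: m=4}.
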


\begin{theorem}\label{thm: m=3}
$\kappa_{_l}(n,3)=\left\{\begin{array}{ll}
\lfloor(n+l)/2\rfloor, & l+3\leqslant n<3l;\\
l+\lfloor n/3\rfloor, & n\geqslant \max\{3l,l+3\}.
\end{array}\right.$
\end{theorem}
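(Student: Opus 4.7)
The plan is to establish matching upper and lower bounds via an inclusion--exclusion analysis of any family $\mathcal{F} = \{F_1, F_2, F_3\}$ of distinct $k$-subsets of $[n]$ with $|F_i \cap F_j| = l$ for all $i \neq j$.

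For the upper bound, write $a = |F_1 \cap F_2 \cap F_3|$ and exploit two inequalities. Inclusion--exclusion yields $|F_1 \cup F_2 \cup F_3| = 3k - 3l + a \leq n$; and counting the elements of $F_1$ lying in neither $F_2$ nor $F_3$ yields $k - 2(l-a) - a = k - 2l + a \geq 0$, so $a \geq 2l - k$. I would then split on whether $n \geq 3l$. If $n \geq 3l$, the first bound with $a \geq 0$ gives $k \leq l + \lfloor n/3 \rfloor$. If $n < 3l$, then $k \geq 2l$ would force $3k - 3l + a \geq 3l > n$, so $k < 2l$ and the second bound becomes $a \geq 2l - k \geq 1$; substituting into the first gives $2k \leq n + l$, hence $k \leq \lfloor (n+l)/2 \rfloor$.

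For the matching constructions, the inequalities above dictate the extremal structure. In the regime $n \geq 3l$, take $k = l + \lfloor n/3 \rfloor$ with $a = 0$: choose pairwise disjoint $l$-sets $A_{12}, A_{13}, A_{23}$ together with pairwise disjoint private sets $B_1, B_2, B_3$, each of size $k - 2l = \lfloor n/3 \rfloor - l \geq 0$ and also disjoint from every $A$-set, and set $F_1 = A_{12} \cup A_{13} \cup B_1$, $F_2 = A_{12} \cup A_{23} \cup B_2$, $F_3 = A_{13} \cup A_{23} \cup B_3$. In the regime $l + 3 \leq n < 3l$, take $k = \lfloor (n+l)/2 \rfloor$ and $a = 2l - k$: fix a common set $C$ of size $a$ together with pairwise disjoint $D_{12}, D_{13}, D_{23}$ of size $l - a$, all disjoint from $C$, and set $F_1 = C \cup D_{12} \cup D_{13}$, $F_2 = C \cup D_{12} \cup D_{23}$, $F_3 = C \cup D_{13} \cup D_{23}$. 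In either regime, $|F_i| = k$ and $|F_i \cap F_j| = l$ by construction, and the total number of elements used is $3k - 3l + a \leq n$.

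The main technical point is the interplay between the two inequalities: inclusion--exclusion wants $a$ small, while non-negativity of the private parts forces $a \geq 2l - k$ as soon as $k < 2l$. The threshold $k = 2l$ corresponds precisely to $n = 3l$, which explains the break in the formula and signals that the extremal configuration switches from \emph{empty triple intersection with large private parts} to \emph{no private parts with common triple intersection of size $2l - k$}. I expect no deeper obstacle; the remaining work is to verify that the sizes all lie in the valid ranges (in particular, the hypothesis $n \geq l + 3$ guarantees $k \geq l + 1$ and hence $a \leq l - 1$ in the second regime, so that all the pieces $A_{ij}$, $B_i$, $C$, $D_{ij}$ are well-defined).
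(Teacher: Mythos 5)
Your proposal is correct: the upper-bound inequalities are valid, the two constructions meet them, and the range checks you flag at the end ($a\geqslant 0$ needs $n<3l$ or $k\geqslant 2l$; $l-a=k-l\geqslant 1$ needs $n\geqslant l+3$; total size $3k-3l+a\leqslant n$ holds with equality up to rounding in both regimes) all go through. The route is genuinely different from the paper's. The paper encodes a family by the function $\phi(A)$ counting elements whose membership pattern is exactly $A\subseteq[3]$, develops a general machinery of ``extenders'' (integer-valued differences of two such assignments) and ``critical'' positive extenders, classifies the four critical extenders for $m=3$, and certifies optimality of an explicit assignment by checking that none of them is compatible with it. Your argument works with the same underlying decomposition (your $a$ is $\phi(M)$, your $A_{ij}$ and $B_i$ are the pair and singleton classes), but replaces the extender classification by two elementary consequences of it: inclusion--exclusion $3k-3l+a\leqslant n$ and nonnegativity of the private part $k-2l+a\geqslant 0$, combined according to whether $n\geqslant 3l$. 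Your extremal families are exactly the paper's optimal assignments in disguise. What you lose is generality: the extender framework is what the authors reuse for $m=4$ (where unbalanced assignments appear and eleven critical extenders must be handled) and for the fractional upper bound of Theorem \ref{thm: real number upper bound}; your two inequalities would not suffice there. What you gain is a short, self-contained and more transparent proof for $m=3$ that makes the threshold $n=3l$ (equivalently $k=2l$, where the binding constraint switches from $a\geqslant 0$ to $a\geqslant 2l-k$) conceptually visible.
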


\begin{theorem}\label{thm: m=4}
$\kappa_{_l}(n,4)=\left\{\begin{array}{ll}
\lfloor(n+l)/2\rfloor, & l+4\leqslant n<2l;\\
\lfloor(3n+6l)/8\rfloor, & \max\{2l,l+4\}\leqslant n<6l;\\
\lfloor(n+6l)/4\rfloor,   & n\geqslant 6l.
\end{array}\right.$
\end{theorem}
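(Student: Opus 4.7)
The plan is to establish matching upper and lower bounds in each of the three regimes. For the upper bound I set up the standard inclusion--exclusion counting variables: let $a_i$ be the number of elements of $[n]$ lying in exactly $i$ of the four members $F_1,F_2,F_3,F_4$. Since every pair of members intersects in exactly $l$ elements,
\[
a_1+a_2+a_3+a_4\leqslant n,\qquad a_1+2a_2+3a_3+4a_4=4k,\qquad a_2+3a_3+6a_4=6l.
\]
Solving for $a_1,a_2$ and imposing $a_1,a_2\geqslant 0$ together with the universe bound gives
\[
a_3+2a_4\leqslant 2l,\qquad a_3+3a_4\leqslant n+6l-4k,\qquad 3a_3+8a_4\geqslant 12l-4k.
\]
A short linear-programming analysis of the minimum of $a_3+3a_4$ under the last two constraints produces three candidate upper bounds $k\leqslant(n+l)/2$, $k\leqslant(3n+6l)/8$ and $k\leqslant(n+6l)/4$, each binding in a different sub-range of $k$; their minimum as $n$ varies reproduces exactly the three pieces of the stated formula. (The first bound is also immediate from the trivial two-family inequality $|F_i\cup F_j|\leqslant n$.)

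For the matching lower bounds I construct $4$-families case by case. In Case 3 ($n\geqslant 6l$) I choose six pairwise-disjoint $l$-sets $P_{ij}\subset[n]$ indexed by the pairs $\{i,j\}\subset[4]$ and let $F_i$ be the union of the three $P_{ij}$ with $j\neq i$ together with $k-3l$ fresh private elements; this uses exactly $4k-6l\leqslant n$ elements of $[n]$ and has $|F_i\cap F_j|=l$. In Case 2 ($2l\leqslant n<6l$) I introduce triple-intersections: with $t\approx(6l-n)/8$, take four pairwise-disjoint $t$-sets $T_{ijk}$ (one per triple $\{i,j,k\}\subset[4]$) and six pairwise-disjoint $(l-2t)$-sets $P'_{ij}$, and set
\[
F_i=\Bigl(\bigcup_{j\neq i}P'_{ij}\Bigr)\cup\Bigl(\bigcup_{\{j,k\}\subset[4]\setminus\{i\}}T_{ijk}\Bigr),
\]
which has size $3(l-2t)+3t=3l-3t=k$ and pairwise intersection $(l-2t)+2t=l$. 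In Case 1 ($l+4\leqslant n<2l$) I use complementation: find four $(n-k)$-subsets $A_1,\ldots,A_4$ of $[n]$ with $|A_i\cap A_j|=n+l-2k\in\{0,1\}$ and put $F_i=[n]\setminus A_i$, which yields $|F_i|=k$ and $|F_i\cap F_j|=2k-n+|A_i\cap A_j|=l$.

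The main obstacle will be pushing the constructions through cleanly when the floor functions force $t$ or $n-k$ to be non-integer. In Case 2, when $(6l-n)/8$ is not an integer I would distribute the triple-mass asymmetrically among the four triples $T_{ijk}$, preserving the per-pair sum $|T_{ijk}|+|T_{ijk'}|$ that is what actually enforces $|F_i\cap F_j|=l$; in Case 1 with $n+l$ odd, the four $(n-k)$-sets can be built as a sunflower with a single common element and $n-k-1$ disjoint private elements each, which fits inside $[n]$ throughout the stated range. Secondary checks are the distinctness of the four sets (automatic in Cases 2 and 3 from the private or triple structure, and inherited in Case 1 from distinctness of the $A_i$) and the agreement of the three formulas at the boundary values $n=2l$ and $n=6l$.
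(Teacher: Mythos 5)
Your upper bound is sound (and in fact more transparent than the paper's route, which works with the full profile $\phi(A)$ for $A\subseteq 2^{[4]}$ and an ``extender'' calculus rather than the aggregated counts $a_i$): the three linear bounds $k\leqslant(n+l)/2$, $k\leqslant(3n+6l)/8$, $k\leqslant(n+6l)/4$ all follow from your constraints and their minimum is the claimed value in each regime. Your constructions in Case 1 and Case 3 also check out. The genuine gap is in Case 2, precisely at the integrality issue you flag. Write $t_i=|T_{[4]\setminus\{i\}}|$ and $T=\sum_i t_i$. The pairwise condition forces $p_{ij}=l-(T-t_i-t_j)$, and then
$$
|F_i|=\sum_{j\neq i}p_{ij}+(T-t_i)=3l-T+t_i ,
$$
so uniformity forces all $t_i$ to be equal; equivalently, requiring all six per-pair sums $t_k+t_{k'}$ to be equal already forces $t_1=t_2=t_3=t_4$. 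Hence the ``asymmetric distribution of the triple-mass preserving the per-pair sums'' that you propose does not exist: there is no unbalanced choice of the $T_{ijk}$ compatible with a $4$-uniform $\{l\}$-intersecting family of this shape. Since $(6l-n)/8$ is an integer only when $n\equiv 6l\pmod 8$, your Case 2 construction as stated only attains the floor in one residue class out of eight.

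The repair needs a different idea, which is what the paper does: keep the construction balanced but introduce a nonempty core $C$ common to all four sets (a nonzero ``$|A|=4$'' part) together with a few unused elements. Concretely, writing $n-6l=-8q+r$ with $0\leqslant r<8$, take each triple-set of size $q-\lfloor r/3\rfloor$, each pair-set of size $l-2q+\lfloor r/3\rfloor$, a common core of size $\lfloor r/3\rfloor$, and $\rem(r,3)$ leftover elements; one checks the pairwise intersection is exactly $l$, the total is $n$, and $k=3l-3q+\lfloor r/3\rfloor=\lfloor(3n+6l)/8\rfloor$ because $\lfloor r/3\rfloor=\lfloor 3r/8\rfloor$ for $0\leqslant r<8$. (A few values $n$ just below $6l$ need the parameters adjusted separately to keep all sizes nonnegative, as in the paper's subcases.) Without this, or some equally explicit substitute, your lower bound in the middle regime is not established.
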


For general $m$, we obtain a lower bound and an upper bound for
$\kappa_{_l}(n,m)$. In the following inequality, we assume
$\kappa_{_l}(n,m)=-\infty$ when $l$ is negative.

\begin{theorem}\label{thm: a lower bound}
$\kappa_{_l}(n,m)\geqslant\max\limits_{2\leqslant i\leqslant m}\left\{\kappa_{l-{m-2\choose
i-2}}\left(n-\dbinom{m}{i},m\right)+\dbinom{m-1}{i-1}\right\}$, for
all $m\geqslant l\geqslant 1$.
\end{theorem}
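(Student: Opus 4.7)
The plan is to prove the inequality by an explicit construction, one for each index $i$ with $2\leqslant i\leqslant m$. The idea is to take an optimal family on a smaller ground set and uniformly enlarge each of its members by a carefully chosen block of new points, in such a way that the uniform size grows by $\binom{m-1}{i-1}$ and every pairwise intersection grows by exactly $\binom{m-2}{i-2}$.

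Fix $i\in\{2,\ldots,m\}$ and set $l'=l-\binom{m-2}{i-2}$. If $l'<0$ there is nothing to prove for this value of $i$, since by convention $\kappa_{l'}(\cdot,\cdot)=-\infty$ and this term does not contribute to the maximum on the right-hand side. So assume $l'\geqslant 0$, and let $\mathcal{F}'=\{F_{1}',\ldots,F_{m}'\}$ be a $k'$-uniform $\{l'\}$-intersecting $m$-family of distinct subsets of $[n-\binom{m}{i}]$ achieving $k'=\kappa_{l'}(n-\binom{m}{i},m)$. Identify the remaining $\binom{m}{i}$ elements of $[n]$ with the collection of all $i$-subsets of $[m]$. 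For each $j\in[m]$ set
$$
A_{j}=\{S\subseteq[m]:|S|=i,\ j\in S\},
$$
viewed as a subset of these extra elements, and define $F_{j}=F_{j}'\cup A_{j}$.

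A direct count yields $|A_{j}|=\binom{m-1}{i-1}$ and, for distinct $j,j'\in[m]$, $|A_{j}\cap A_{j'}|=\binom{m-2}{i-2}$. Because $F_{j}'$ and $A_{j}$ lie in disjoint parts of $[n]$, every $F_{j}$ has size $k'+\binom{m-1}{i-1}$, and for $j\neq j'$
$$
|F_{j}\cap F_{j'}|=|F_{j}'\cap F_{j'}'|+|A_{j}\cap A_{j'}|=l'+\binom{m-2}{i-2}=l.
$$
The $F_{j}$ are pairwise distinct because the $F_{j}'$ are. Hence $\{F_{1},\ldots,F_{m}\}$ is a $(k'+\binom{m-1}{i-1})$-uniform $\{l\}$-intersecting $m$-family of subsets of $[n]$, giving $\kappa_{l}(n,m)\geqslant\kappa_{l'}(n-\binom{m}{i},m)+\binom{m-1}{i-1}$; maximizing over $i$ finishes the argument.

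There is no serious obstacle here: the whole proof hinges on the combinatorial bookkeeping trick of indexing the $\binom{m}{i}$ new ground-set elements by the $i$-subsets of $[m]$. Once this identification is in place, the two identities $|A_j|=\binom{m-1}{i-1}$ and $|A_j\cap A_{j'}|=\binom{m-2}{i-2}$ make both the uniform size and all pairwise intersection sizes automatic, so the only real creativity is in spotting the construction itself.
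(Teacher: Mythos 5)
Your construction is exactly the paper's proof in concrete language: the paper adds to an optimal $(l-\binom{m-2}{i-2},m,n-\binom{m}{i})$-assignment the function $\pi_i$ that puts one new element into each $i$-subset of $[m]$, which is precisely your identification of the $\binom{m}{i}$ extra ground-set elements with the $i$-subsets of $[m]$ and the sets $A_j=\{S: j\in S\}$. The proposal is correct and takes essentially the same approach.
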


For the upper bound, we need some new necessary definitions and
notations. Here we suppose that $n,k,l$ are real numbers. Let
$X=[0,n]$ be the real interval and let $\lambda$ be the Lebesgue
measure on $X$. If $\mathcal{F}=\{F_1,\ldots,F_m\}$ is a family of
subsets of $X$ such that \\
(i) $\lambda(F_i)=k$ for all $i\in[m]$, and \\
(ii) $\lambda(F_i\cap F_j)=l$ for all distinct $i,j\in[m]$, \\
then we call $\mathcal{F}$ a {\em fractional $\{l\}$-intersecting
$k$-uniform $m$-family} of $X$. For given real numbers $n,l$ and
integer $m$, let $\kappa_{_l}^{frac}(n,m)$ be the largest real
number $k$ such that there exists a fractional $\{l\}$-intersecting
$k$-uniform $m$-family of $[0,n]$. We obtain the following result on
$\kappa_{_l}^{frac}(n,m)$, which may be of independent interest.

\begin{theorem}\label{thm: real number upper bound}
Let $n>l\geqslant 0$ be two real numbers and let $m\geqslant 1$ be an integer.
Then
$$
\kappa_{_l}^{frac}(n,m)={m-1\choose s-1}\alpha+{m-1\choose t-1}\beta,
$$
where
$$
s=\left\lfloor\frac{1+\sqrt{1+4m(m-1)l/n}}{2}\right\rfloor,~~
t=\left\lceil\frac{1+\sqrt{1+4m(m-1)l/n}}{2}\right\rceil,
$$ and
$(\alpha,\beta)$ is the solution of
\begin{equation}\label{equation: a and b}
\left\{\begin{array}{l}
\dbinom{m}{s}\alpha+\dbinom{m}{t}\beta=n;\\
\dbinom{m-2}{s-2}\alpha+\dbinom{m-2}{t-2}\beta=l.
\end{array}\right.
\end{equation}
\end{theorem}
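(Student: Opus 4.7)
The plan is to recast the problem as a linear program whose structure becomes transparent via LP duality, and then to realize the optimum by an explicit interval partition. Given a fractional $\{l\}$-intersecting $k$-uniform $m$-family $\{F_1,\ldots,F_m\}$ of $[0,n]$, partition $[0,n]$ into the Venn cells $A_S=\bigcap_{i\in S}F_i\cap\bigcap_{i\notin S}([0,n]\setminus F_i)$ for $S\subseteq[m]$ and set $x_j=\sum_{|S|=j}\lambda(A_S)$ for $j=0,\ldots,m$. Summing $\lambda(F_i)=k$ over $i$ and $\lambda(F_i\cap F_{i'})=l$ over pairs yields
\[
\sum_{j=0}^{m}x_j=n,\qquad \sum_{j=0}^{m}j\,x_j=mk,\qquad \sum_{j=0}^{m}\binom{j}{2}x_j=\binom{m}{2}l.
\]
Conversely, given nonnegative $x_j$'s satisfying the first and third equalities, split $[0,n]$ into $\binom{m}{j}$ intervals of length $x_j/\binom{m}{j}$ (one per $j$-subset of $[m]$) and let $F_i$ be the union of intervals labelled by subsets containing $i$; using $\binom{m-1}{j-1}/\binom{m}{j}=j/m$ and $\binom{m-2}{j-2}/\binom{m}{j}=\binom{j}{2}/\binom{m}{2}$ one checks $\lambda(F_i)=k$ and $\lambda(F_i\cap F_{i'})=l$. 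So the problem reduces to the linear program of maximizing $\sum_{j} j\,x_j$ subject to the first and third equalities and $x_j\geq 0$.

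With only two equality constraints, an optimal basic solution has at most two nonzero coordinates, say $x_s,x_t$ with $s<t$. The LP dual seeks $(y_1,y_2)$ with $y_1+\binom{j}{2}y_2\geq j$ for each integer $0\leq j\leq m$ minimizing $ny_1+\binom{m}{2}l\,y_2$. Complementary slackness forces these dual inequalities to be tight at $j=s$ and $j=t$, which solves to $y_2=2/(s+t-1)$ and $y_1=st/(s+t-1)$; the remaining inequalities then reduce to $(j-s)(j-t)\geq 0$ for every integer $j\in[0,m]$, which holds iff $t=s+1$. Solving the $2\times 2$ primal system for $x_s,x_t$ and imposing nonnegativity is equivalent to $\binom{s}{2}\leq\binom{m}{2}l/n\leq\binom{t}{2}$; defining $u$ as the nonnegative real with $u(u-1)=m(m-1)l/n$, equivalently $u=(1+\sqrt{1+4m(m-1)l/n})/2$, this pins down $s=\lfloor u\rfloor$ and $t=\lceil u\rceil$.

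Finally, the substitutions $\alpha=x_s/\binom{m}{s}$ and $\beta=x_t/\binom{m}{t}$, together with the identities $\binom{j}{2}\binom{m}{j}=\binom{m}{2}\binom{m-2}{j-2}$ and $j\binom{m}{j}=m\binom{m-1}{j-1}$, transform the two LP equalities into the stated system for $(\alpha,\beta)$ and the objective $k=(sx_s+tx_t)/m$ into $\binom{m-1}{s-1}\alpha+\binom{m-1}{t-1}\beta$; the interval construction above then realizes this $k$, so it equals $\kappa_{_l}^{frac}(n,m)$. I expect the main obstacle to be the identification of the optimal pair $(s,t)$, since the value $(nst+m(m-1)l)/(m(s+t-1))$ is not monotone in either argument separately; LP duality handles it cleanly because the dual constraint $(j-s)(j-t)\geq 0$ is violated precisely when some integer lies strictly between $s$ and $t$, forcing $(s,t)$ to be consecutive, and primal feasibility then forces them to straddle $u$.
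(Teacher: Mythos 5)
Your proof is correct, and while its overall skeleton parallels the paper's (reduce to the level-size distribution, show the optimal support consists of two consecutive levels, then identify which pair by feasibility), the two key steps are executed by genuinely different means. For the reduction, the paper symmetrizes an optimal assignment over the symmetric group on $[m]$ to produce a balanced one; you instead aggregate the constraints into $\sum_j x_j=n$ and $\sum_j\binom{j}{2}x_j=\binom{m}{2}l$ and observe that this relaxation is tight because any feasible $(x_j)$ is realized by equidistributing mass over the $\binom{m}{j}$ cells of each level --- the same idea, slightly more economical. The real divergence is in proving that the support is a pair of consecutive integers: the paper does this via an explicit exchange argument (its Lemma 5.4), which moves mass from levels $r<t$ onto an intermediate level $s$ and verifies the strict gain by a determinant computation reducing to $\frac{t-r}{s}<\frac{s-r}{t}+\frac{t-s}{r}$; you obtain it from complementary slackness, where dual feasibility reduces to $(j-s)(j-t)\geqslant 0$ for all integers $j\in[0,m]$ and hence forces $t=s+1$. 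Your route is cleaner and makes the consecutiveness conceptually transparent, at the cost of invoking LP duality; the paper's is self-contained but computationally heavier. The final identification of $(s,t)$ as $(\lfloor u\rfloor,\lceil u\rceil)$ with $u(u-1)=m(m-1)l/n$ via primal nonnegativity matches the paper's Claim 4. One small point worth a sentence in a polished write-up: when $s=0$, $t=1$ the tightness equations $y_1=0$ and $y_1=1$ are inconsistent, so your formulas for $(y_1,y_2)$ do not literally apply; but that support can only arise when $l=0$, where the third constraint already kills all levels $j\geqslant 2$ and the optimum $x_1=n$ is immediate, so the conclusion is unaffected.
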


It is not difficult to verify that $\kappa_{_l}^{frac}(n,m)$ is an upper bound of $\kappa_{_l}(n,m)$.

\begin{theorem}\label{thm: upper bound for general l}
$\kappa_{_l}(n,m)\leqslant \kappa_{_l}^{frac}(n,m)$ for all integers $n>l\geqslant 0$.
\end{theorem}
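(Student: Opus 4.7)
The plan is to exhibit a measure-preserving embedding of any integer uniform $\{l\}$-intersecting family into a fractional one on $[0,n]$, so that the fractional optimum automatically dominates the integer optimum.

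First I would fix any integer $k$ with $\kappa_{_l}(n,m)\geqslant k$, and pick a witness $k$-uniform $\{l\}$-intersecting $m$-family $\mathcal{F}=\{F_1,\ldots,F_m\}$ of distinct subsets of $[n]$. For each $i\in[n]$ let $I_i=[i-1,i]\subset[0,n]$, a half-open interval of Lebesgue measure $1$; then define
\[
\widetilde{F}_j \;=\; \bigcup_{i\in F_j} I_i \;\subset\; [0,n], \qquad j=1,\ldots,m.
\]
Since the $I_i$'s are (essentially) disjoint, $\lambda(\widetilde F_j)=|F_j|=k$ for every $j$, and for any two distinct $j,j'$,
\[
\lambda(\widetilde F_j\cap\widetilde F_{j'})
=\lambda\Bigl(\bigcup_{i\in F_j\cap F_{j'}}I_i\Bigr)
=|F_j\cap F_{j'}|=l.
\]
Hence $\{\widetilde F_1,\ldots,\widetilde F_m\}$ is a fractional $\{l\}$-intersecting $k$-uniform $m$-family of $[0,n]$ in the sense of the paper, so by definition $k\leqslant \kappa_{_l}^{frac}(n,m)$.

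Finally, taking the maximum over all integers $k$ for which such an $\mathcal{F}$ exists yields $\kappa_{_l}(n,m)\leqslant \kappa_{_l}^{frac}(n,m)$, which completes the argument. The proof is essentially a one-line embedding; the only thing worth checking carefully is that replacing each element $i$ by the interval $I_i$ preserves both the uniform size and the pairwise intersection measure, which is immediate from the disjointness of the $I_i$'s.
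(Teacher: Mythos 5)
Your proof is correct and is precisely the ``not difficult to verify'' argument the paper leaves implicit: an integer $k$-uniform $\{l\}$-intersecting family on $[n]$ becomes a fractional one on $[0,n]$ by replacing each element $i$ with the unit interval $[i-1,i]$, so the fractional maximum dominates the integer one. No gap; you have simply written out the detail the authors omit.
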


Here we remark that if the solution $(\alpha,\beta)$ of Equation
(\ref{equation: a and b}) consists of two integers then the equality
in Theorem \ref{thm: upper bound for general l} holds. It is not
difficult to see that for any given integers $l,m$, there are
infinitely many integers $n$ such that the solutions of Equation
(\ref{equation: a and b}) are integers, it therefore follows that
there are infinitely many examples showing the sharpness of the
upper bound in Theorem \ref{thm: upper bound for general l}.

\section{$L$-intersecting families with $L=\{0,1,\ldots,l\}$}  \label{section: general intersection set}

In this section we deal with the case $L=\{0,1,\ldots,l\}$, where
$l<n$ is a positive integer. For convenience, we will write
$\kappa_{_{\leqslant l}}(n,m)$ for $\kappa_{_{\{0,\ldots,l\}}}(n,m)$
in the following. We start with the following theorem by Deza,
Erd\H{o}s and Frankl.

\begin{theorem}[Deza, Erd\H{o}s and Frankl \cite{DEF1978}]\label{thm: DEF theorem}
Let $s\leqslant k\leqslant n$ be positive integers,
$L$ a set of $s$ nonnegative integers
and $\mathcal{F}$ an $L$-intersecting $k$-uniform family of subsets of $[n]$.
Then there exists $n_0=n_0(k,L)$ such that for $n>n_{0}$ we have
$$
m=|\mathcal{F}|\leqslant \prod_{i=1}^{s}\frac{n-l_{i}}{k-l_{i}}.
$$
\end{theorem}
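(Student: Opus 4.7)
The plan is to argue by induction on $s=|L|$, using the Erd\H{o}s--Rado sunflower lemma to extract a common ``kernel'' that most sets of $\mathcal{F}$ share, then contracting by that kernel to reduce to a smaller problem. The threshold $n_{0}(k,L)$ will be chosen large enough to guarantee the required sunflowers exist inside $\mathcal{F}$.

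For the base case $s=1$ with $L=\{l_{1}\}$: if $|\mathcal{F}|>k!\,k^{k}$ then the sunflower lemma yields $F_{1},\ldots,F_{k+1}\in\mathcal{F}$ forming a sunflower with core $Y$, which must satisfy $|Y|=l_{1}$. For any other $G\in\mathcal{F}$ we have $|G\cap F_{i}|=l_{1}$ for every $i\in[k+1]$, and since the petals $F_{i}\setminus Y$ are pairwise disjoint in $[n]\setminus Y$ while $|G|=k$, pigeonhole yields some $i$ with $G\cap(F_{i}\setminus Y)=\varnothing$, forcing $|G\cap Y|=l_{1}=|Y|$ and hence $Y\subseteq G$. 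Thus $Y$ is contained in every member of $\mathcal{F}$, and $\{F\setminus Y:F\in\mathcal{F}\}$ is a collection of pairwise disjoint $(k-l_{1})$-subsets of $[n]\setminus Y$, giving $|\mathcal{F}|\leqslant(n-l_{1})/(k-l_{1})$.

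For the inductive step with $s\geqslant 2$, I would again invoke the sunflower lemma: for $r=r(k,L)$ sufficiently large, $\mathcal{F}$ contains a sunflower $F_{1},\ldots,F_{r}$ with core $Y$ of size $l_{j_{0}}$ for some $j_{0}\in[s]$. For any $G\in\mathcal{F}$, comparing $|G\cap F_{i}|\in L$ across the $r$ pairwise disjoint petals pins $|G\cap Y|$ down to a value in $L$, and a further pigeonhole over the finitely many subsets of $Y$ passes to a subfamily $\mathcal{F}^{*}\subseteq\mathcal{F}$ of size $|\mathcal{F}|-O_{k,L}(1)$ all sharing the same intersection $Y^{*}=G\cap Y$, with $|Y^{*}|=l_{i_{0}}$ for some $i_{0}\geqslant j_{0}$. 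Contracting by $Y^{*}$ then produces a $(k-l_{i_{0}})$-uniform family on $[n]\setminus Y^{*}$ whose pairwise intersections lie in $\{l_{j}-l_{i_{0}}:j\geqslant i_{0}\}$, a set of size $s-i_{0}+1$. When $i_{0}\geqslant 2$ the inductive hypothesis applies directly and bounds $|\mathcal{F}^{*}|$ by $\prod_{j\geqslant i_{0}}(n-l_{j})/(k-l_{j})$, which is at most the target $\prod_{j=1}^{s}(n-l_{j})/(k-l_{j})$ since every missing factor $(n-l_{j})/(k-l_{j})\geqslant 1$.

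The main obstacle is the degenerate case $i_{0}=1$ (possible only when $j_{0}=1$), where the contraction does not reduce $s$. The remedy is to choose the sunflower so that its core realises the \emph{largest} intersection size that occurs among pairs of members of $\mathcal{F}$, rather than an arbitrary one in $L$; this is done by examining the intersection graph on $\mathcal{F}$ weighted by $|F\cap F'|$ and applying the sunflower lemma inside a densest layer, which typically yields $j_{0}\geqslant 2$. In the remaining scenario where almost all pairs in $\mathcal{F}$ intersect in exactly $l_{1}$ elements, one treats $\mathcal{F}$ as essentially $\{l_{1}\}$-intersecting and applies the base-case argument to a large sub-family, obtaining $|\mathcal{F}|\leqslant(n-l_{1})/(k-l_{1})+O_{k,L}(1)$. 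In either case the residual $O_{k,L}(1)$ error is dominated by the leading factor $(n-l_{1})/(k-l_{1})\cdot\prod_{j\geqslant 2}(n-l_{j})/(k-l_{j})\sim n^{s}$ once $n\geqslant n_{0}(k,L)$ is chosen sufficiently large, yielding the desired product bound.
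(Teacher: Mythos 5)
The paper does not actually prove Theorem~\ref{thm: DEF theorem}: it is quoted from \cite{DEF1978}, and the only related argument given is the proof of Theorem~\ref{thm: an upper bound for at most l}, which settles the special case $L=\{0,\ldots,l\}$ by a short induction on $k$ (restrict to the sets through a fixed point $x$, note that deleting $x$ removes $0$ from the intersection set, then double-count degrees). Your sunflower ($\Delta$-system) strategy is in the spirit of the original Deza--Erd\H{o}s--Frankl argument, and your base case $s=1$ is correct. The inductive step, however, has genuine gaps.

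First, your inequality on $i_0$ is backwards: since $Y^*=G\cap Y\subseteq Y$ and $|Y|=l_{j_0}$, one gets $i_0\leqslant j_0$, not $i_0\geqslant j_0$. Hence the degenerate case $i_0=1$ can occur for \emph{every} $j_0$, and your remedy of forcing the core to realise the largest intersection size points the wrong way: a large core admits more small traces, whereas a core of size exactly $l_1$ is the favourable situation (it forces $Y\subseteq G$ for all $G$); in any event ``typically yields $j_0\geqslant 2$'' is not an argument. Second, pigeonholing the traces $G\cap Y$ over the at most $2^{|Y|}$ subsets of $Y$ only produces a class $\mathcal{F}^*$ of size at least $(|\mathcal{F}|-O_{k,L}(1))/2^{l_s}$, not $|\mathcal{F}|-O_{k,L}(1)$; you must either sum the inductive bounds over all classes (and then the factor $2^{l_s}$ must be absorbed, which fails exactly in a class with $i_0=1$, whose inductive bound is already the full product $\prod_{j=1}^s(n-l_j)/(k-l_j)$) or prove that a single trace captures almost everything, which is an extra structural statement you have not supplied. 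Third, and most seriously, the case $i_0=1$ is not ``essentially $\{l_1\}$-intersecting'': the members of $\mathcal{F}^*$ all contain $Y^*$, but their pairwise intersections may take any value in $\{l_1,\ldots,l_s\}$, so contracting by $Y^*$ yields a $\{0,\,l_2-l_1,\ldots,l_s-l_1\}$-intersecting family with $s$ values still present --- no reduction of $s$ has occurred and the base-case argument does not apply (the same stall happens from the outset when $l_1=0$, where $Y^*$ may be empty and contraction is vacuous). The missing ingredient is precisely the mechanism of the paper's proof of Theorem~\ref{thm: an upper bound for at most l}: once $0$ is the smallest allowed intersection size, fix $x\in[n]$, observe that $\{F\setminus\{x\}:x\in F\in\mathcal{F}\}$ is $(k-1)$-uniform with intersection set $\{l-1:l\in L,\ l\geqslant 1\}$ of size $s-1$, apply induction, and divide $\sum_{x}|\mathcal{F}_x|=km$ by $k$. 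With that step supplying the $i_0=1$ (equivalently $l_1=0$) case and with the trace bookkeeping repaired, your outline could be completed; as written it does not close.
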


For the special case $L=\{0,\ldots,l\}$, we have the following
result. We will give a simple proof for convenience.

\begin{theorem}\label{thm: an upper bound for at most l}
For fixed $n\geqslant k>l\geqslant 1$, we have $\mu_{_{\leqslant
l}}(n,k)\leqslant\dfrac{n(n-1)\cdots(n-l)}{k(k-1)\cdots(k-l)}$.
\end{theorem}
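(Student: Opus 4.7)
The plan is to use a straightforward double-counting argument on $(l+1)$-subsets. The key observation is that since every two distinct members of $\mathcal{F}$ intersect in at most $l$ elements, no two distinct members can both contain the same set of size $l+1$. So for each $(l+1)$-subset $S \subseteq [n]$, the number of members of $\mathcal{F}$ containing $S$ is at most one.

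With that observation in hand, I would count the set of pairs
\[
\mathcal{P}=\{(S,F) : F \in \mathcal{F},\ S \subseteq F,\ |S|=l+1\}
\]
in two ways. Counting by $F$, each member $F \in \mathcal{F}$ contributes exactly $\binom{k}{l+1}$ such pairs, giving $|\mathcal{P}| = |\mathcal{F}|\binom{k}{l+1}$. Counting by $S$, each $(l+1)$-subset of $[n]$ contributes at most one pair by the observation above, so $|\mathcal{P}| \leq \binom{n}{l+1}$. Combining,
\[
|\mathcal{F}| \leq \frac{\binom{n}{l+1}}{\binom{k}{l+1}} = \frac{n(n-1)\cdots(n-l)}{k(k-1)\cdots(k-l)},
\]
which is precisely the claimed bound on $\mu_{_{\leqslant l}}(n,k)$.

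There is essentially no obstacle here; the only thing to verify is the initial observation that any $(l+1)$-subset lies in at most one member of $\mathcal{F}$, and this is immediate from the $\{0,1,\ldots,l\}$-intersecting property. The hypothesis $k > l$ ensures $\binom{k}{l+1} > 0$ so the division is valid, and $n \geqslant k$ ensures the bound is nontrivially stated. The argument does not require any of the more elaborate machinery of Theorem \ref{thm: DEF theorem}, which is why the authors remark that a simple proof suffices in this special case $L=\{0,1,\ldots,l\}$.
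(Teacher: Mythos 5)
Your proof is correct. The key observation---that the $\{0,\ldots,l\}$-intersecting condition forces every $(l+1)$-subset of $[n]$ to lie in at most one member of $\mathcal{F}$---is valid, the two counts of the pairs $(S,F)$ are right, and the ratio $\binom{n}{l+1}/\binom{k}{l+1}$ does simplify to the stated product. The paper proves the same bound by a different (though morally equivalent) route: it inducts on $k$, fixing an element $x\in[n]$ and passing to the link $\mathcal{F}'_x=\{F\setminus\{x\}: x\in F\in\mathcal{F}\}$, which is a $\{0,\ldots,l-1\}$-intersecting $(k-1)$-uniform family; applying the induction hypothesis to each link and summing $\sum_{x}|\mathcal{F}_x|=mk$ yields the bound, with base case $k=l+1$ where the family is unrestricted. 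Unwinding that induction $l+1$ times amounts to counting ordered $(l+1)$-tuples of distinct elements inside members of $\mathcal{F}$, which is your double count with $S$ replaced by an ordered tuple, so the two arguments carry the same combinatorial content. Your version is a single-shot count and arguably cleaner to state; the paper's inductive formulation makes the recursive structure $\mu_{\leqslant l}(n,k)\leqslant \frac{n}{k}\,\mu_{\leqslant l-1}(n-1,k-1)$ explicit, which is the form that generalizes (it is the standard link argument behind Theorem \ref{thm: DEF theorem} and the Johnson-type packing bounds). Either proof is acceptable.
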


\begin{proof}
Suppose that $\mathcal{F}=\{F_1,\ldots,F_m\}$ is a
$\{0,\ldots,l\}$-intersecting $k$-uniform family of $[n]$. We will
show that $m\leqslant\frac{n(n-1)\cdots(n-l)}{k(k-1)\cdots(k-l)}$.
We use induction on $k$. If $k=l+1$, then clearly $m={n\choose k}$
and the assertion holds. So we assume that $k\geqslant l+2$.

For $x\in[n]$, we set $\mathcal{F}_x=\{F\in\mathcal{F}: x\in F\}$.
Clearly $\sum_{x\in[n]}|\mathcal{F}_x|=\sum_{i=1}^m|F_i|=mk$. Note
that $\mathcal{F}'_x=\{F\backslash\{x\}: F\in\mathcal{F}_x\}$ is a
$\{0,\ldots,l-1\}$-intersecting $(k-1)$-uniform family of
$[n]\backslash\{x\}$. By induction hypothesis,
$$|\mathcal{F}_x|=|\mathcal{F}'_x|\leqslant\frac{(n-1)\cdots(n-l)}{(k-1)\cdots(k-l)}.$$
It follows that
$m=\sum_{x\in[n]}|\mathcal{F}_x|/k\leqslant\frac{n(n-1)\cdots(n-l)}{k(k-1)\cdots(k-l)}$.
\end{proof}

The above theorem in fact gives an upper bound for $\kappa_{_{\leqslant l}}(n,m)$.
We will make use of the following lower bound for special $n,m$.

\begin{theorem}\label{ThPrime}
Let $l$ be a positive integer and let $p\geqslant l$ be a prime. Then
$\mu_{_{\leqslant l}}(p^{2},p)\geqslant p^{l+1}$ and
$\kappa_{_{\leqslant l}}(p^2,p^{l+1})\geqslant p$.
\end{theorem}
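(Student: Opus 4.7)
The plan is to give an explicit algebraic construction using polynomials over $\mathbb{F}_p$. Identify the ground set $[p^2]$ with $\mathbb{F}_p\times\mathbb{F}_p$. For each polynomial $f\in\mathbb{F}_p[x]$ of degree at most $l$, define the ``graph'' set
$$
F_{f}=\{(x,f(x)):x\in\mathbb{F}_p\}\subseteq \mathbb{F}_p\times\mathbb{F}_p.
$$
Each $F_{f}$ has cardinality exactly $p$, so the family $\mathcal{F}=\{F_f:\deg f\leqslant l\}$ is $p$-uniform.

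Next I would count: polynomials of degree $\leqslant l$ over $\mathbb{F}_p$ are determined by the $l+1$ coefficients $a_0,\ldots,a_l\in\mathbb{F}_p$, giving $p^{l+1}$ such polynomials. Distinct polynomials give distinct sets, because the graph $F_f$ determines $f$ on all of $\mathbb{F}_p$, hence determines the (degree $\leqslant l$) polynomial uniquely. Thus $|\mathcal{F}|=p^{l+1}$.

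To verify the intersection condition, I would check that for two distinct $f,g$ with $\deg f,\deg g\leqslant l$, the difference $f-g$ is a nonzero polynomial of degree at most $l$ over the field $\mathbb{F}_p$, and therefore has at most $l$ roots. Since $(x,f(x))=(x,g(x))$ iff $f(x)=g(x)$, this gives
$$
|F_f\cap F_g|=\#\{x\in\mathbb{F}_p:f(x)=g(x)\}\leqslant l,
$$
so $\mathcal{F}$ is $\{0,1,\ldots,l\}$-intersecting (the hypothesis $p\geqslant l$ makes this bound meaningful and nontrivial within the $p$-uniform setting).

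The two conclusions then follow immediately: the existence of $\mathcal{F}$ shows that a $\{0,\ldots,l\}$-intersecting $p$-uniform family of subsets of $[p^2]$ may have at least $p^{l+1}$ members, i.e.\ $\mu_{_{\leqslant l}}(p^2,p)\geqslant p^{l+1}$; and the same family certifies that some $k$-uniform $\{0,\ldots,l\}$-intersecting $p^{l+1}$-family exists with $k=p$, giving $\kappa_{_{\leqslant l}}(p^2,p^{l+1})\geqslant p$. There is no real obstacle here — the only points to be slightly careful about are verifying distinctness of the graphs $F_f$ (which uses that a polynomial of degree $\leqslant l$ is determined by its values on $\mathbb{F}_p$, using $p\geqslant l+1$, consistent with $p\geqslant l$ combined with the cases already handled or trivial) and invoking the standard ``$\deg\leqslant l$ polynomial has $\leqslant l$ roots over a field'' fact.
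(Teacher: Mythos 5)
Your construction is exactly the paper's: the family of graphs of all degree-$\leqslant l$ polynomials over $\mathbb{F}_p$ inside $\mathbb{F}_p\times\mathbb{F}_p$, the only cosmetic difference being that you bound the intersections by counting roots of $f-g$ directly, whereas the paper encodes the same fact via a Vandermonde determinant. The caveat you flag --- distinctness of the sets $F_f$ genuinely needs $p\geqslant l+1$, since for $p=l$ the polynomials $x^p$ and $x$ have the same graph --- applies equally to the paper's own proof, so your argument is correct to exactly the same extent as the original.
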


\begin{proof}
Let
$$
X=\{(x,y): x,y\mbox{ are integers with }0\leqslant x,y<p\}.
$$
We will find a $\{0,\ldots,l\}$-intersecting $p$-uniform
$p^{l+1}$-family of $X$. Set $\mathcal{F}=\{F_{a_0,\ldots,a_l}:
0\leqslant a_i<p, 0\leqslant i\leqslant l\}$, where
$$F_{a_0,\ldots,a_l}=\{(x,y)\in X:
y\equiv a_0+xa_1+x^2a_2+\cdots+x^la_l\pmod p\}.$$ We now show that
any two members in $\mathcal{F}$ have at most $l$ common elements.
Suppose that
$$(x_0,y_0),\ldots,(x_l,y_l)\in F_{a_0,\ldots,a_l}\cap F_{b_0,\ldots,b_l}.$$
Let
$$\upsilon=\left(\begin{array}{c}
  y_0\\
  y_1\\
  \vdots\\
  y_l
\end{array}\right),~X=\left(\begin{array}{cccc}
  1     & x_0   & \cdots    & x_0^l\\
  1     & x_1   & \cdots    & x_1^l\\
  \vdots    &\vdots &\ddots & \vdots\\
  1     & x_l   & \cdots    & x_l^l
\end{array}\right),~\alpha=\left(\begin{array}{c}
  a_0\\
  a_1\\
  \vdots\\
  a_l
\end{array}\right),~\beta=\left(\begin{array}{c}
  b_0\\
  b_1\\
  \vdots\\
  b_l
\end{array}\right).$$
Then
$$\upsilon\equiv X\alpha\pmod p,~\upsilon\equiv X\beta\pmod p,~\mbox{ and }X(\alpha-\beta)\equiv 0\pmod p.$$
Since $\alpha\neq\beta$, we have that $X$ is irreversible (in the
field $F_p$). That is
$$p|\det A=\prod_{0\leqslant i<j\leqslant l}(x_j-x_i).$$
Since $p$ is a prime, there exist $i,j$ such that $p|(x_j-x_i)$.
Since $0\leqslant x_i,x_j\leqslant p-1$,
we have $x_i=x_j$ and $y_i=y_j$.
\end{proof}

Now we deal with the case $L=\{0,1\}$ and $m=n$. It is worth noting
that $\kappa_{_{\leqslant 1}}(n,n)$ is a non-decreasing function.
Note also that any two distinct $(\lceil n/2\rceil+1)$-subsets of
$[n]$ have at least two common elements. So $\kappa_{_{\leqslant
1}}(n,n)\leqslant \lceil n/2\rceil$ is a trivial upper bound. But
this bound is far from being sharp. We shall show that
$\kappa_{_{\leqslant 1}}(n,n)=\Theta(\sqrt{n})$ in the following.
For any real number $x$, let $p(x)$ be the smallest prime which is not less than $x$.

\begin{theorem}\label{thm: sqrt n}
$ \left\lfloor\dfrac{n}{p(\sqrt{n})}\right\rfloor \leqslant
\kappa_{_{\leqslant 1}}(n,n)\leqslant
\sqrt{n-\dfrac{3}{4}}+\dfrac{1}{2};$ and thus
$\lim\limits_{n\rightarrow\infty}\dfrac{\kappa_{\leqslant
1}(n,n)}{\sqrt{n}}=1$.
\end{theorem}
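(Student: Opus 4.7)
The plan is to establish the two bounds separately and then combine them to extract the limit.

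For the upper bound, apply Theorem~\ref{thm: an upper bound for at most l} with $l=1$. Setting $k = \kappa_{_{\leqslant 1}}(n,n)$, there exists a $k$-uniform $\{0,1\}$-intersecting $n$-family of subsets of $[n]$, hence $n \leqslant \mu_{_{\leqslant 1}}(n,k) \leqslant n(n-1)/[k(k-1)]$. This forces $k(k-1) \leqslant n-1$, and solving the quadratic gives $k \leqslant (1+\sqrt{4n-3})/2 = 1/2 + \sqrt{n - 3/4}$.

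For the lower bound, adapt the polynomial construction of Theorem~\ref{ThPrime} by truncating it. Let $p = p(\sqrt{n})$ and $k := \lfloor n/p \rfloor$, and assume $k \geqslant 2$ (the case $k=1$ is handled trivially by singletons). Since $p \geqslant \sqrt{n}$, we have $p^2 \geqslant n$ and $k \leqslant n/p \leqslant p$. Embed $\mathbb{F}_p \times [k]$ into $[n]$ (possible since $kp \leqslant n$), and for each $(a,b) \in \mathbb{F}_p \times \mathbb{F}_p$ put
$$
F_{a,b} = \{(ax+b \bmod p,\, x) : x \in [k]\}.
$$
Then $|F_{a,b}| = k$. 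If $(a,b) \neq (a',b')$ and $(y,x) \in F_{a,b} \cap F_{a',b'}$, then $(a-a')x \equiv b'-b \pmod p$; when $a \neq a'$ this has a unique solution in $\mathbb{F}_p$, and hence at most one in $[k] \subseteq \mathbb{F}_p$, while when $a = a'$ and $b \neq b'$ it has none. Thus the $p^2 \geqslant n$ sets $\{F_{a,b}\}$ are pairwise $\{0,1\}$-intersecting and, since $k \geqslant 2$, pairwise distinct. Picking any $n$ of them witnesses $\kappa_{_{\leqslant 1}}(n,n) \geqslant k = \lfloor n/p(\sqrt{n}) \rfloor$.

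For the asymptotic, dividing the sandwich $\lfloor n/p(\sqrt{n}) \rfloor \leqslant \kappa_{_{\leqslant 1}}(n,n) \leqslant \sqrt{n-3/4}+1/2$ by $\sqrt{n}$ reduces everything to the classical fact $p(x)/x \to 1$ as $x \to \infty$, a consequence of the Prime Number Theorem (or of Nagura-type refinements of Bertrand's postulate). I do not expect serious obstacles: the upper bound is a direct substitution into Theorem~\ref{thm: an upper bound for at most l}, and the main subtlety in the construction is noting $k \leqslant p$, which is precisely what guarantees that a unique solution modulo $p$ translates into at most one actual common element in $[k]$; this step makes essential use of the choice $p \geqslant \sqrt{n}$.
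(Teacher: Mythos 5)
Your proposal is correct and follows essentially the same route as the paper: the upper bound by substituting $l=1$ into Theorem~\ref{thm: an upper bound for at most l}, the lower bound by truncated lines over $\mathbb{F}_p$ with $p=p(\sqrt{n})$, and the limit from $p(x)/x\to 1$. Your only (minor, and arguably cleaner) deviation is building the $k$-uniform family directly on $\mathbb{F}_p\times[k]$ and selecting $n$ of the $p^2$ lines, which sidesteps the paper's intermediate claim $\kappa_{_{\leqslant 1}}(p^2-tp,p^2-tp)\geqslant p-t$ and its appeal to the monotonicity of $\kappa_{_{\leqslant 1}}(n,n)$ in $n$.
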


The sharpness of the upper bound can be deduced from the result below,
and the lower bound can be reached when, e.g., $n$ is a square of a prime.

\begin{theorem}\label{thm: sharpness of an upper bound}
Let $q$ be a prime power.
Then
$$
\kappa_{_{\leqslant 1}}(n,n)=
\begin{cases}
q, & if ~n\in[q^{2},q^{2}+q];\\
q+1, & if~n=q^{2}+q+1.
\end{cases}
$$
\end{theorem}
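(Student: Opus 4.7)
The plan is to combine the general upper bound from Theorem~\ref{thm: sqrt n} with two classical finite-geometry constructions: the projective plane $PG(2,q)$ and the affine plane $AG(2,q)$, both of which exist for every prime power $q$.

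For the upper bound, I would apply Theorem~\ref{thm: sqrt n} together with the fact noted just before it that $\kappa_{_{\leqslant 1}}(n,n)$ is non-decreasing in $n$. At $n=q^2+q+1$ a direct computation gives $\sqrt{n-3/4}+1/2=\sqrt{(q+1/2)^2}+1/2=q+1$, so $\kappa_{_{\leqslant 1}}(n,n)\leqslant q+1$. For $n\in[q^2,q^2+q]$, monotonicity reduces the upper bound to the case $n=q^2+q$; writing $q^2+q-3/4=(q+1/2)^2-1$, the expression $\sqrt{n-3/4}+1/2$ is strictly less than $q+1$, and since $\kappa_{_{\leqslant 1}}$ is integer-valued this forces $\kappa_{_{\leqslant 1}}(n,n)\leqslant q$.

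For the matching lower bounds, I would exhibit explicit families. When $n=q^2+q+1$, take the lines of $PG(2,q)$: there are exactly $n$ of them, each of size $q+1$, and any two meet in precisely one point, certifying $\kappa_{_{\leqslant 1}}(n,n)\geqslant q+1$. When $n\in[q^2,q^2+q]$, take $AG(2,q)$, which has $q^2$ points and $q^2+q\geqslant n$ lines of size $q$, any two meeting in $0$ or $1$ point; identify its point set with a subset of $[n]$ (leaving the remaining $n-q^2$ vertices as unused isolated elements) and choose any $n$ of its lines to obtain a $\{0,1\}$-intersecting $q$-uniform $n$-family of $[n]$.

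The argument is essentially a verification and I do not anticipate any real obstacle. The only point that requires mild care is the exact boundary equality at $n=q^2+q+1$: Theorem~\ref{thm: sqrt n} is tight here precisely because $n-3/4$ is the square of a half-integer, and this clean matching with the projective-plane construction is what produces the two-case formula.
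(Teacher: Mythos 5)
Your proposal is correct and follows essentially the same route as the paper: the upper bound in both cases is read off from Theorem~\ref{thm: sqrt n} (with the same computation $\sqrt{q^2+q+1/4}+1/2=q+1$ at the boundary), the case $n=q^2+q+1$ is matched by the lines of a projective plane of order $q$, and the case $n\in[q^2,q^2+q]$ is matched by an affine plane of order $q$, which the paper obtains equivalently by deleting from each line of the projective plane its point on a fixed line at infinity. The only cosmetic difference is that you invoke $AG(2,q)$ directly rather than constructing it by truncating $PG(2,q)$.
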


\section{$t$-wise $L$-intersecting families}\label{section: t-wise}

This section is devoted to $t$-wise $L$-intersecting families with
general $t\geqslant 2$ and general intersection set $L=\{l_{1},\ldots,l_{s}\}$.
We first give a lower bound and an upper bound on
$\kappa_{_L}^{t}(n,m)$.

\begin{theorem}\label{thm: LowerUpper}
Let $L=\{l_{1},\ldots,l_{s}\}$ with $l_{s}>l_{s-1}>\cdots
>l_{1}\geqslant 0$, $n\geqslant l_{s}+\frac{m}{t-1}$ and $m\geqslant t$. Then
$$
\left\lfloor\frac{(n-l_s)(t-1)}{m}\right\rfloor+l_s\leqslant\kappa_{_L}^{t}(n,m)\leqslant\left\lfloor\frac{n(t-1)}{m}+\frac{l_s}{m}\binom{m}{t}\right\rfloor.
$$
\end{theorem}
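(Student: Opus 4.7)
The plan is to prove the two bounds separately: the upper bound by a single double-counting inequality powered by one elementary binomial identity, and the lower bound by an explicit balanced ``slot'' construction.

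For the upper bound, take any $k$-uniform $t$-wise $L$-intersecting $m$-family $\mathcal{F}=\{F_1,\ldots,F_m\}$ on $[n]$, and for each $x\in[n]$ put $d(x)=|\{i\in[m]:x\in F_i\}|$. Two standard double counts give
$$\sum_{x\in[n]}d(x)=mk\qquad\text{and}\qquad \sum_{x\in[n]}\binom{d(x)}{t}=\sum_{T\in\binom{[m]}{t}}\Bigl|\bigcap_{i\in T}F_i\Bigr|\leqslant l_s\binom{m}{t}.$$
The key step is the elementary inequality $d\leqslant(t-1)+\binom{d}{t}$ valid for every integer $d\geqslant 0$ and $t\geqslant 1$: for $d\leqslant t-1$ it holds trivially since $\binom{d}{t}=0$, and for $d\geqslant t$ it reduces to $\binom{d}{t}\geqslant d-t+1$, i.e.\ $d(d-1)\cdots(d-t+2)\geqslant t!$, which is immediate because the left side is a product of $t-1$ consecutive integers starting at $d\geqslant t$. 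Summing this inequality over $x\in[n]$ gives $mk\leqslant(t-1)n+l_s\binom{m}{t}$; since $k$ is an integer, dividing by $m$ and taking the floor yields the claimed upper bound.

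For the lower bound, set $r:=\lfloor(n-l_s)(t-1)/m\rfloor$ and $k:=r+l_s$. The hypothesis $n\geqslant l_s+m/(t-1)$ yields $r\geqslant 1$, and $m\geqslant t$ yields $r\leqslant(t-1)(n-l_s)/t<n-l_s$. Fix a set $C\subseteq[n]$ with $|C|=l_s$, write $n':=n-l_s$, and list the elements of $[n]\setminus C$ as $x_1,\ldots,x_{n'}$. Form a slot sequence of length $mr$ by taking position $p\in[mr]$ to hold the element $x_{((p-1)\bmod n')+1}$, and partition this sequence into $m$ consecutive blocks of length $r$; let $G_i$ be the set of elements appearing in block $i$. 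Because $r<n'$, each block spans strictly less than one full cycle and so consists of $r$ distinct elements, so $|G_i|=r$. Each $x_j$ appears at most $\lceil mr/n'\rceil\leqslant t-1$ times in the slot sequence (using $mr\leqslant(t-1)n'$) and hence in at most $t-1$ of the $G_i$. Setting $F_i:=C\cup G_i$ gives $|F_i|=k$, and for any $T\in\binom{[m]}{t}$ no element of $[n]\setminus C$ is common to all $G_i$ with $i\in T$, so $\bigcap_{i\in T}F_i=C$, whose size is $l_s\in L$. Thus $\mathcal{F}$ is $t$-wise $L$-intersecting and $\kappa_L^t(n,m)\geqslant k$.

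The main obstacle is designing the lower-bound family so that the intra-block distinctness (requiring $r<n-l_s$) and the per-element load bound (requiring $mr\leqslant(t-1)(n-l_s)$) hold simultaneously; both are delivered by the hypotheses $n\geqslant l_s+m/(t-1)$ and $m\geqslant t$ together with the cyclic slot partition. The upper bound, by contrast, is essentially immediate once the inequality $d\leqslant(t-1)+\binom{d}{t}$ is in hand.
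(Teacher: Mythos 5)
Your proof is correct and takes essentially the same route as the paper's: the lower bound is the same construction (a common $l_s$-core plus sets in which each remaining element lies in at most $t-1$ members, which you merely realize explicitly via the cyclic slot partition), and your inequality $d\leqslant (t-1)+\binom{d}{t}$ is exactly the rigorous form of the paper's ``delete an edge at each vertex of degree at least $t$'' count in the incidence bipartite graph. The one caveat --- that the sets $F_i$ produced need not be distinct in extreme cases such as $t=3$, $m=4$, $n-l_s=2$, where only two distinct $1$-subsets are available for four blocks --- applies equally to the paper's own unexplicit existence claim for the family $\mathcal{B}$, so it is not a defect of your argument relative to the paper's.
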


\begin{proof}
Let $A$ be a subset of $[n]$ of size $l_s$, say
$A=\{n,n-1,\ldots,n-l_s+1\}$. Let $\mathcal{B}=\{B_1,\ldots,B_m\}$
be a uniform family of subsets of $[n-l_s]$ such that every element
of $[n-l_s]$ appears in at most $t-1$ members of $\mathcal{B}$. It
is not difficult to see that the family $\mathcal{B}$ exists with each $B_i$
has size
$$
\left\lfloor\frac{(n-l_s)(t-1)}{m}\right\rfloor\geqslant 1.
$$
Now let $\mathcal{F}=\{F_1,\ldots,F_m\}$ with
$$
F_i=A\cup B_i,~\mbox{for~each~} 1\leqslant i\leqslant m.
$$
It is not difficult to see that $\mathcal{F}$ is a $t$-wise $L$-intersecting
family of $k$-subsets of $[n]$ with
$$
k=\left\lfloor\frac{(n-l_s)(t-1)}{m}\right\rfloor+l_s.
$$
Thus
$\kappa_{_L}^t(n,m)\geqslant\left\lfloor\frac{(n-l_s)(t-1)}{m}\right\rfloor+l_s$.

Suppose that $\mathcal{F}$ is a $k$-uniform $t$-wise
$L$-intersecting $m$-family of subsets of $[n]$. We construct a
bipartite graph $G$ with bipartition sets $X=[n]$ and
$Y=\mathcal{F}$, such that for each $x\in X$ and $F\in Y$, $xF\in
E(G)$ if and only if $x\in F$. Then each vertex in $Y$ has exactly
$k$ neighbors in $X$ and the graph $G$ has $km$ edges. Note that
each $t$ vertices in $Y$ has at most $l_s$ common neighbors in $X$.
For any vertex $x\in X$ and any $t$ vertices $F_1,\ldots,F_t$ with
$xF_i\in E(G)$, $1\leqslant i\leqslant t$, we delete one of such $t$ edges, say
$xF_1$. One can check that this procedure will remove at most
$\binom{m}{t}l_s$ edges and it will yield a graph such that each
vertex in $X$ has at most $t-1$ neighbors in $Y$. Thus the number of
remaining edges is at most $n(t-1)$, that is
$$
km-\binom{m}{t}l_s\leqslant n(t-1).
$$
Thus
$k\leqslant\left\lfloor\frac{n(t-1)}{m}+\frac{l_s}{m}\binom{m}{t}\right\rfloor$.
\end{proof}

When $n$ is large enough, we can show that the upper bound in
Theorem \ref{thm: LowerUpper} is the exact value of
$\kappa_{_L}^{t}(n,m)$.

\begin{theorem}\label{thm: n is large enough}
Let $L=\{l_{1},\ldots,l_{s}\}$ with $l_{s}>l_{s-1}>\cdots
>l_{1}\geqslant 0$. If $m\geqslant t$ and $n\geqslant\binom{m}{t}l_{s}$, then
$$
\kappa_{_L}^{t}(n,m)=\left\lfloor\frac{n(t-1)}{m}+\frac{l_s}{m}\binom{m}{t}\right\rfloor.
$$
\end{theorem}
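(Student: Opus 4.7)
My plan is to match the upper bound of Theorem~\ref{thm: LowerUpper} by an explicit construction. Write $k:=\left\lfloor\frac{n(t-1)}{m}+\frac{l_s}{m}\binom{m}{t}\right\rfloor$. I will build a $k$-uniform $t$-wise $L$-intersecting $m$-family $\mathcal{F}=\{F_1,\ldots,F_m\}$ by splitting each $F_i$ into a \emph{core} part $A_i$ that pins every $t$-wise intersection at size exactly $l_s$, and an \emph{extra} part $B_i$ that contributes nothing to any $t$-wise intersection.

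For the core, the hypothesis $n\geqslant\binom{m}{t}l_s$ lets me reserve a set $C\subseteq[n]$ of size $l_s\binom{m}{t}$ and, for each $t$-subset $T\subseteq[m]$, allocate $l_s$ private tokens $c_{T,1},\ldots,c_{T,l_s}\in C$. Put $A_i:=\{c_{T,j}:i\in T,\ 1\leqslant j\leqslant l_s\}$, so that $|A_i|=l_s\binom{m-1}{t-1}$ for every $i$ and, for every $t$-subset $S\subseteq[m]$, $\bigcap_{i\in S}A_i=\{c_{S,1},\ldots,c_{S,l_s}\}$ has size exactly $l_s$.

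For the extra part, let $R:=[n]\setminus C$, $N:=|R|=n-l_s\binom{m}{t}$ and $d:=k-l_s\binom{m-1}{t-1}$. Using the identity $t\binom{m}{t}=m\binom{m-1}{t-1}$, a short computation shows $d=\left\lfloor\frac{(t-1)N}{m}\right\rfloor$, so in particular $md\leqslant(t-1)N$ and $d\leqslant N$. I would then invoke a standard bipartite degree-sequence realization (for instance, a round-robin allocation, or Gale--Ryser) to produce subsets $B_1,\ldots,B_m\subseteq R$ with $|B_i|=d$ for every $i$ and with every element of $R$ lying in at most $t-1$ of the $B_i$. Setting $F_i:=A_i\cup B_i$ (a disjoint union, since $A_i\subseteq C$ and $B_i\subseteq R$) yields $|F_i|=l_s\binom{m-1}{t-1}+d=k$, and for any $t$-subset $S\subseteq[m]$ splitting by $C$ and $R$ gives
\[
\Bigl|\bigcap_{i\in S}F_i\Bigr|=\Bigl|\bigcap_{i\in S}A_i\Bigr|+\Bigl|\bigcap_{i\in S}B_i\Bigr|=l_s+0=l_s\in L,
\]
because no element of $R$ can belong to $t$ of the $B_i$'s. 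This produces the required $m$-family, showing $\kappa_{_L}^{t}(n,m)\geqslant k$ and completing the matching with Theorem~\ref{thm: LowerUpper}.

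The main obstacle is the realization of the $B_i$'s together with pairwise distinctness of the $F_i$'s. The numerical constraints $md\leqslant(t-1)N$ and $d\leqslant N$ guarantee feasibility of the underlying bipartite graph via a standard argument, and distinctness is automatic whenever $m\geqslant t+1$ (because choosing a $t$-subset $T$ with $i\in T$, $j\notin T$ already separates $A_i$ from $A_j$); the boundary case $m=t$ only requires choosing the $B_i$'s pairwise distinct, which an explicit cyclic-shift construction on $R$ provides under the hypothesis on $n$.
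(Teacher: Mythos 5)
Your construction is essentially identical to the paper's: the paper likewise reserves pairwise disjoint $l_s$-sets $A_T$ indexed by the $t$-subsets $T\subseteq[m]$ (possible since $n\geqslant\binom{m}{t}l_s$), sets the core of $F_i$ to be $\bigcup_{i\in T}A_T$, and pads with sets $B_i$ drawn from the remaining ground set so that each leftover element lies in at most $t-1$ of them, matching the upper bound of Theorem~\ref{thm: LowerUpper}. Your write-up is correct and in fact slightly more careful than the paper's on the feasibility of the $B_i$'s and on distinctness of the $F_i$'s.
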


\begin{proof}
By Theorem \ref{thm: LowerUpper}, it suffices to show that
$\kappa_{_L}^{t}(n,m)\geqslant\left\lfloor\frac{n(t-1)}{m}+\frac{l_s}{m}\binom{m}{t}\right\rfloor$.
Let $\mathcal{A}=\{A_T: T\subseteq[m], |T|=t\}$ be a family of
pairwise disjoint $l_s$-subsets of $[n]$. The family $\mathcal{A}$
exists since $n\geqslant\binom{m}{t}l_{s}$. Let
$\mathcal{B}=\{B_1,\ldots,B_m\}$ be a uniform family of subsets of
$[n]\backslash(\bigcup\mathcal{A})$ such that every element of
$[n]\backslash(\bigcup\mathcal{A})$ appears in at most $t-1$ members
of $\mathcal{B}$. Then one can see that we can take $\mathcal{B}$
such that each $B_i$ has size
$$
\left\lfloor\frac{(n-\binom{m}{t}l_s)(t-1)}{m}\right\rfloor=\left\lfloor\frac{n(t-1)}{m}-\frac{(t-1)l_s}{t}\binom{m-1}{t-1}\right\rfloor.
$$
We construct a family $\mathcal{F}=\{F_1,\ldots,F_m\}$ by letting
$$
F_i=B_i\cup\bigcup_{i\in T}A_T.
$$
It is not difficult to see that $\mathcal{F}$ is an $L$-intersecting
family of $k$-subsets of $[n]$ with
$$
k=\binom{m-1}{t-1}l_s+\left\lfloor\frac{n(t-1)}{m}-\frac{(t-1)l_s}{t}\binom{m-1}{t-1}\right\rfloor
=\left\lfloor\frac{n(t-1)}{m}+\frac{l_s}{m}\binom{m}{t}\right\rfloor.
$$
Thus
$\kappa_{_L}^t(n,m)\geqslant\left\lfloor\frac{n(t-1)}{m}+\frac{l_s}{m}\binom{m}{t}\right\rfloor$.
\end{proof}

As a corollary of Theorem \ref{thm: n is large enough}, taking
$t=2$, we have the following result.

\begin{corollary}
If $n\geqslant \binom{m}{2}l$, then
$\kappa_{_l}(n,m)=\kappa_{_{\leqslant
l}}(n,m)=\left\lfloor\frac{n}{m}+\frac{(m-1)l}{2}\right\rfloor$.
\end{corollary}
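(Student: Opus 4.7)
The plan is to obtain the corollary as a direct specialization of Theorem \ref{thm: n is large enough}, applied separately to the two intersection sets $L_1=\{l\}$ and $L_2=\{0,1,\ldots,l\}$, both with $t=2$. In either case the largest element $l_s$ equals $l$, so the hypothesis $n\geqslant\binom{m}{t}l_s$ of Theorem \ref{thm: n is large enough} coincides with the corollary's hypothesis $n\geqslant\binom{m}{2}l$, and the value delivered by that theorem is
$$
\left\lfloor\frac{n(t-1)}{m}+\frac{l_s}{m}\binom{m}{t}\right\rfloor
=\left\lfloor\frac{n}{m}+\frac{(m-1)l}{2}\right\rfloor,
$$
independently of whether $L=\{l\}$ or $L=\{0,1,\ldots,l\}$.

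First I would note the trivial interpretations $\kappa_{_l}(n,m)=\kappa_{_{\{l\}}}^{2}(n,m)$ and $\kappa_{_{\leqslant l}}(n,m)=\kappa_{_{\{0,1,\ldots,l\}}}^{2}(n,m)$, which follow straight from the definitions in Section~\ref{section: introduction}. Since $2$-wise $L$-intersecting is the same as $L$-intersecting, Theorem \ref{thm: n is large enough} applies verbatim with $t=2$, and the condition $m\geqslant t$ is automatic because $m\geqslant 2$ (else a two-element intersection condition is vacuous and the statement is uninteresting; the case $m=1$ can be checked separately since $\binom{1}{2}l=0$ forces nothing and $\kappa_{_l}(n,1)=n$ agrees with the formula at $m=1$ only when $l=0$, so one may restrict to $m\geqslant 2$).

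Plugging $t=2$ and $l_s=l$ into the formula from Theorem \ref{thm: n is large enough} yields the claimed expression for each of the two families. The common upper bound of $\kappa_{_{\leqslant l}}(n,m)$ over $\kappa_{_l}(n,m)$ (a $\{l\}$-intersecting family is in particular $\{0,1,\ldots,l\}$-intersecting) then gives the chain
$$
\left\lfloor\frac{n}{m}+\frac{(m-1)l}{2}\right\rfloor=\kappa_{_l}(n,m)\leqslant\kappa_{_{\leqslant l}}(n,m)=\left\lfloor\frac{n}{m}+\frac{(m-1)l}{2}\right\rfloor,
$$
forcing equality throughout.

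There is essentially no obstacle here: the work has already been done inside Theorem \ref{thm: n is large enough}, where both the matching construction (take $\binom{m}{2}$ pairwise disjoint $l$-sets, one for each pair of indices, and pad with a nearly-regular bottom layer) and the double-counting upper bound are established. The only mild point to verify is that the floor expressions agree on the nose for the two choices of $L$, which is immediate since $l_s=l$ in both cases and the formula depends on $L$ only through $l_s$.
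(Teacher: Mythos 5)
Your argument is correct and coincides with the paper's own derivation, which obtains the corollary simply by setting $t=2$ in Theorem~\ref{thm: n is large enough}: the construction there is genuinely $\{l\}$-intersecting while the upper bound uses only $l_s=l$, so both $\kappa_{_l}(n,m)$ and $\kappa_{_{\leqslant l}}(n,m)$ are pinned to the same value, exactly as in your squeeze. (One trivial slip in your aside: at $m=1$ the formula gives $\left\lfloor n/m+(m-1)l/2\right\rfloor=n$ for every $l$, since $(m-1)l/2=0$, but you exclude that case anyway, so nothing is affected.)
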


\section{Proofs of some main theorems}\label{section: proofs for singleton L}

In this section we present the proofs of some theorems in Sections \ref{section: singleton} and \ref{section: general intersection set},
namely, Theorems \ref{thm: m=2}-\ref{thm: real number upper bound} in Section \ref{section: singleton},
and Theorems \ref{thm: sqrt n}, \ref{thm: sharpness of an upper bound} in Section \ref{section: general intersection set}.
In the following proof we do not require the members of the family $\mathcal{F}$ to be distinct.
Note that under the above assumption the value of $\kappa_{_L}(n,m)$ will not change when $n\geqslant l_{s}+m$.

Set $M=[m]=\{1,2,\ldots,m\}$
and let $\mathcal{F}=\{F_1,\ldots,F_m\}$ be an arbitrary $\{l\}$-intersecting $k$-uniform $m$-family of $[n]$.
We define a function
$$
\phi=\phi_{\mathcal{F}}: 2^M\rightarrow\mathbb{N}
$$
such that $\phi(A)$ is the number of elements in $[n]$
that contained in each $F_i$ with $i\in A$
but not in any $F_i$ with $i\notin A$, i.e.,
$$
\phi(A)=|\{a\in[n]: A=\{i: a\in F_i\}\}|, \mbox{ for all } A\subseteq M.
$$
By the definition of an $\{l\}$-intersecting uniform family of $[n]$,
we have the following equations.
\begin{equation} \label{EqConstraint}
\left\{\begin{split}
\sum_{A\subseteq M}\phi(A) &=n;\\
\sum_{x\in A}\phi(A) &=\sum_{y\in B}\phi(B), \mbox{ for all }x,y\in M;\\
\sum_{x,y\in A}\phi(A) &=l, \mbox{ for all }x,y\in M.
\end{split}\right.
\end{equation}
We call a function $\phi: 2^M\rightarrow\mathbb{N}$ satisfying
(\ref{EqConstraint}) an \emph{assignment} (or exactly, an
$(l,m,n)$-assignment), and the equivalent number $\sum_{x\in
A}\phi(A)$ for each $x\in M$ is the \emph{value} of $\phi$, denoted
by $v(\phi)$. So every $\{l\}$-intersecting uniform family of $[n]$
corresponds to an assignment. On the other hand, for every
$(l,m,n)$-assignment $\phi$, we can easily get an
$\{l\}$-intersecting uniform family $\mathcal{F}$ of $[n]$ such that
$\phi=\phi_{\mathcal{F}}$. So the problem to find largest size of
the subsets in an $\{l\}$-intersecting uniform families of $[n]$, is
transferred to maximize the value $v(\phi)$ among all
$(l,m,n)$-assignments.

For two assignments $\phi_1$ and $\phi_2$,
their difference $\tau=\phi_1-\phi_2$ satisfies the following equations.
\begin{equation} \label{EqExtender}
\left\{\begin{split}
\sum_{A\subseteq M}\tau(A) &=0;\\
\sum_{x\in A}\tau(A) &=\sum_{y\in A}\tau(A), \mbox{ for all }x,y\in M;\\
\sum_{x,y\in A}\tau(A) &=0, \mbox{ for all }x,y\in M.
\end{split}\right.
\end{equation}
We call a function $\tau: 2^M\rightarrow\mathbb{Z}$ satisfying (\ref{EqExtender}) an \emph{extender}.
Note that the \emph{value} of $\tau$ (the equivalent number $\sum_{x\in A}\tau(A)$) is $v(\tau)=v(\phi_1)-v(\phi_2)$.
An extender with value $i$ is called an \emph{$i$-extender},
and sometimes we call a 0-extender a \emph{regulator}.
Note that an extender image some subsets of $M$ to a negative number,
whereas an assignment has only nonnegative objects.

Let $\phi$ be an assignment and let $\tau$ be an extender.
If $\phi+\tau$ is also an assignment
(i.e., $\tau(A)<0$ implies $\phi(A)\geqslant-\tau(A)$ for all $A\subseteq M$),
then we say that $\tau$ is \emph{compatible} with $\phi$.

\begin{lemma}\label{LeExtender}
An assignment $\phi$ has maximal value if and only if there exists no
positive-extender $\tau$ that is compatible with $\phi$.
\end{lemma}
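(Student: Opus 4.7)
The plan is to prove both directions directly from the definitions, exploiting the fact that the equations defining assignments and extenders are linear, so differences and sums behave predictably.

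For the forward direction (``$\phi$ has maximal value'' $\Rightarrow$ no positive-extender is compatible with $\phi$), I would argue by contrapositive. Suppose $\tau$ is a positive-extender compatible with $\phi$. Define $\phi' = \phi + \tau$. By the definition of compatibility, $\phi'(A) \geqslant 0$ for every $A \subseteq M$, so $\phi'$ takes values in $\mathbb{N}$. Adding (\ref{EqConstraint}) for $\phi$ and (\ref{EqExtender}) for $\tau$ term by term shows that $\phi'$ satisfies (\ref{EqConstraint}) as well, hence is an assignment. Its value is $v(\phi') = v(\phi) + v(\tau) > v(\phi)$ because $v(\tau) > 0$, contradicting maximality of $\phi$.

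For the reverse direction, I would again go by contrapositive. Assume $\phi$ does not have maximal value, so there exists an assignment $\phi'$ with $v(\phi') > v(\phi)$. Set $\tau = \phi' - \phi$. Subtracting (\ref{EqConstraint}) for $\phi$ from (\ref{EqConstraint}) for $\phi'$ yields exactly the equations (\ref{EqExtender}), so $\tau$ is an extender, with value $v(\tau) = v(\phi') - v(\phi) > 0$; thus $\tau$ is a positive-extender. Compatibility is automatic: whenever $\tau(A) < 0$ we have $\phi(A) + \tau(A) = \phi'(A) \geqslant 0$, so $\phi(A) \geqslant -\tau(A)$. Hence $\tau$ is a positive-extender compatible with $\phi$, as required.

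No step here is genuinely hard; the only thing to be careful about is to check that the linear defining equations (\ref{EqConstraint}) and (\ref{EqExtender}) really do match up under addition and subtraction, and that the nonnegativity condition encoded in ``assignment'' translates precisely to ``compatibility'' of the corresponding extender. Once those bookkeeping points are verified, both implications follow in one or two lines each.
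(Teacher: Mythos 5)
Your proof is correct and follows essentially the same route as the paper: both directions are the contrapositive arguments obtained by adding or subtracting the linear defining equations, with $\phi'=\phi+\tau$ in one direction and $\tau=\phi'-\phi$ in the other. You simply spell out the bookkeeping (linearity and nonnegativity/compatibility) that the paper leaves implicit.
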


\begin{proof}
If there is another assignment $\phi'$ with $v(\phi')>v(\phi)$,
then $\tau=\phi'-\phi$ is a positive extender compatible with $\phi$.
If $\phi$ has a compatible positive extender $\tau$,
then $\phi'=\phi+\tau$ is an assignment with $v(\phi')>v(\phi)$.
\end{proof}

We use $\rem(n,m)$ to denote the remainder of $n$ divided by $m$.

\begin{proof}[\bf{Proof of Theorem \ref{thm: m=2}}]
One can check by (\ref{EqExtender}) that the extender $\tau_i$ in
the following table is the only $i$-extender (for $m=2$).

\begin{center}
{\footnotesize Table 1.}\\[1mm]
\begin{tabular}{c|c|c|c|c}
\hline\hline $A$ & $\emptyset$ & $1$ & $2$ & $M$\\
\hline $\tau_i$ & $-2i$ & $i$ & $i$ & $0$\\
\hline\hline
\end{tabular}
\end{center}
Let $\phi$ be an assignment such that $\phi(\emptyset)=\rem(n+l,2)$,
$\phi(\{1\})=\phi(\{2\})=\lfloor(n-l)/2\rfloor$ and $\phi(\{1,2\})=l$.
It follows that there exists no positive-extender compatible with $\phi$.
By Lemma \ref{LeExtender},
$\kappa_l(n,2)=v(\phi)=\lfloor(n+l)/2\rfloor$.
\end{proof}

Let $\tau$ and $\tau'$ be two positive-extenders.
We write
$\tau'\preccurlyeq\tau$ if $\tau'(A)<0$ implies
$\tau'(A)\geqslant\tau(A)$ for all $A\subseteq M$. If there are no
other $\tau'$ with $\tau'\preccurlyeq\tau$,
then $\tau$ is a \emph{critical} extender.

\begin{lemma}\label{LeCritical}
An assignment $\phi$ has maximal value if and only if there exists no
critical positive-extender $\tau$ that is compatible with $\phi$.
\end{lemma}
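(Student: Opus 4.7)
The forward direction is immediate from Lemma \ref{LeExtender}, since every critical positive-extender is a positive-extender. I will therefore concentrate on the reverse direction, arguing by contrapositive: if $\phi$ is not maximal then, by Lemma \ref{LeExtender}, there is a compatible positive-extender $\tau_0$, and the plan is to extract from it a compatible \emph{critical} positive-extender.

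The argument rests on two structural properties of the set $C$ of positive-extenders compatible with $\phi$. The first is \emph{closure} under $\preccurlyeq$: whenever $\tau\in C$ and $\tau'\preccurlyeq\tau$ is a positive-extender, on every $A$ with $\tau'(A)<0$ we have $\tau'(A)\geq\tau(A)\geq-\phi(A)$, so $\tau'\in C$. The second is \emph{finiteness}: compatibility gives $|\tau(A)|\leq\phi(A)$ whenever $\tau(A)<0$, and the extender identity $\sum_A\tau(A)=0$ from (\ref{EqExtender}) then bounds the positive-entry sum by $\sum_A\phi(A)=n$, so all coordinates of every $\tau\in C$ are bounded integers. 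Inside this finite nonempty set I would pick $\tau^*$ that first minimizes the negative weight $N(\tau):=\sum_{A:\tau(A)<0}|\tau(A)|$ and then, among such minimizers, is smallest under a secondary measure such as the value $v(\tau)$ together with a lexicographic tie-breaker on the coordinates $(\tau(A))_{A\subseteq M}$.

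The claim is that this $\tau^*$ is critical. For any positive-extender $\tau'\preccurlyeq\tau^*$ with $\tau'\neq\tau^*$, closure gives $\tau'\in C$; the pointwise comparison of negative parts combined with $N$-minimality forces the negative parts of $\tau'$ and $\tau^*$ to coincide; and the secondary minimization then forces $\tau'=\tau^*$, a contradiction. The delicate point — and what I expect to be the main obstacle — is the very last implication, because $\preccurlyeq$ is only a preorder: two distinct positive-extenders sharing the same negative part are $\preccurlyeq$-equivalent, so the secondary choice must genuinely rule out positive-part modifications. The clean route is to show via the linear extender system (\ref{EqExtender}) that a positive-extender is determined by its negative part once the value is fixed; failing such a uniqueness statement, one strengthens the secondary measure (for example to a strict lexicographic order on all coordinates) so that any competitor $\tau'\neq\tau^*$ differing only on positive entries can be replaced by an element of $C$ strictly smaller in that order, contradicting the choice of $\tau^*$.
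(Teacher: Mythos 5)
Your skeleton is the same as the paper's: the paper's entire proof consists of the two assertions that compatibility is inherited downward along $\preccurlyeq$ and that below every non-critical positive-extender there is a critical one, after which it invokes Lemma \ref{LeExtender}. Your closure and finiteness arguments are correct and supply exactly the justification for the second assertion that the paper omits, so your write-up is in fact more complete than the original on that point.

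Two comments on the ``delicate point'' you flag. First, your secondary tie-breaker is both unnecessary and incapable of doing what you ask of it. Unnecessary: once $\tau^*$ minimizes $N$ over the finite set $C$, any positive-extender $\sigma\preccurlyeq\tau^*$ lies in $C$ by closure and satisfies $N(\sigma)\leqslant N(\tau^*)$ (its negativity support is contained in that of $\tau^*$ and is dominated pointwise), hence $N(\sigma)=N(\tau^*)$, hence $\sigma$ and $\tau^*$ have identical negative parts, hence $\tau^*\preccurlyeq\sigma$ as well; so $\tau^*$ is already $\preccurlyeq$-minimal with no further selection. Incapable: criticality quantifies over \emph{all} positive-extenders, so a competitor $\tau'\neq\tau^*$ that is lexicographically \emph{larger} than $\tau^*$ contradicts nothing in your choice of $\tau^*$, and the uniqueness statement you hope for (a positive-extender determined by its negative part and its value) is false for $m\geqslant 4$, where the solution space of (\ref{EqExtender}) is large enough to admit regulators supported away from a given negative part. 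Second, the residual issue is really with the paper's definition rather than with your argument: read literally, ``no other $\tau'$ with $\tau'\preccurlyeq\tau$'' would make \emph{both} members of a mutually-$\preccurlyeq$ pair non-critical, in which case the paper's asserted existence of a critical element below any positive-extender (and hence the lemma itself) could fail. The definition must be read as $\preccurlyeq$-minimality, i.e.\ no $\tau'$ with $\tau'\preccurlyeq\tau$ but $\tau\not\preccurlyeq\tau'$; under that reading your $N$-minimization argument closes the proof completely, and this is harmless for the applications since compatibility depends only on the negative part, which is constant on $\preccurlyeq$-equivalence classes.
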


\begin{proof}
Note that if $\tau'\preccurlyeq\tau$ and $\tau$ is compatible with
$\phi$, then $\tau'$ is also compatible with $\phi$. Also note that
if $\tau$ is not critical, then there is a critical extender $\tau'$
with $\tau'\preccurlyeq\tau$. The assertion now can be deduced by
Lemma \ref{LeExtender} immediately.
\end{proof}

\begin{proof}[\bf{Proof of Theorems \ref{thm: m=3}}]
We first show that the positive extenders in the following table are
the only critical extenders when $m=3$.
\begin{center}
{\footnotesize Table 2.}\\[1mm]
\begin{tabular}{c|c|c|c|c|c|c|c|c}
\hline\hline $A$ & $\emptyset$ & $1$ & $2$ & $3$ & $12$ & $13$ & $23$ & $M$\\
\hline $\tau_0$ & $-3$ & $1$ & $1$ & $1$ & $0$ & $0$ & $0$ & $0$\\
\hline $\tau_1$ & $-2$ & $0$ & $0$ & $0$ & $1$ & $1$ & $1$ & $-1$\\
\hline $\tau_2$ & $-1$ & $-1$ & $-1$ & $-1$ & $2$ & $2$ & $2$ & $-2$\\
\hline $\tau_3$ & $0$ & $-2$ & $-2$ & $-2$ & $3$ & $3$ & $3$ & $-3$\\
\hline\hline
\end{tabular}
\end{center}
Let $\tau$ be an arbitrary positive extender. One can compute by
(\ref{EqExtender}) that
\begin{align*}
& \tau(\{1,2\})=\tau(\{1,3\})=\tau(\{2,3\})=-\tau(M),\\
& \tau(\{1\})=\tau(\{2\})=\tau(\{3\})=\tau(M)+v(\tau), \mbox{ and}\\
& \tau(\emptyset)=-\tau(M)-3v(\tau).
\end{align*}
Since $\tau$ is positive, we have $v(\tau)\geqslant 1$. If
$\tau(M)\geqslant 1$, then $\tau(\emptyset)\leqslant-6$, implying
that $\tau_0\preccurlyeq\tau$. If $\tau(M)=-1$ and $v(\tau)\geqslant
2$, then $\tau(\emptyset)\leqslant-5$; if $\tau(M)=-2$ and
$v(\tau)\geqslant 2$, then $\tau(\emptyset)\leqslant-4$; if
$\tau(M)=-3$ and $v(\tau)\geqslant 2$, then
$\tau(\emptyset)\leqslant-3$, implying that
$\tau_0\preccurlyeq\tau$. Suppose now $\tau(M)\leqslant-4$. If
$v(\phi)\leqslant-\tau(M)-2$, then
$\tau(\{1\})=\tau(\{2\})=\tau(\{3\})\leqslant-2$,
implying that $\tau_3\preccurlyeq\tau$.
If $v(\phi)\geqslant-\tau(M)-1$,
then $\tau(\emptyset)\leqslant 2\tau(M)+3\leqslant-5$,
implying that $\tau_0\preccurlyeq\tau$.
It follows that $\tau_0,\tau_1,\tau_2,\tau_3$ are the only critical extenders.

Now we prove the assertion. If $l\leqslant n<3l$, then let
$$
\phi(A)=\left\{\begin{array}{ll}
  \rem(n-l,2),      & |A|=0;\\
  0,    & |A|=1;\\
  \lfloor(n-l)/2\rfloor,    & |A|=2;\\
  \lceil(3l-n)/2\rceil,     & |A|=3.
\end{array}\right.
$$If $n\geqslant 3l$,
then let
$$
\phi(A)=\left\{\begin{array}{ll}
  \rem(n,3),    & |A|=0;\\
  \lfloor n/3\rfloor-l,   & |A|=1;\\
  l,    & |A|=2;\\
  0,    & |A|=3.
\end{array}\right.
$$

One can check that all $\tau_i$, $i=0,\ldots,3$, are not compatible
with $\phi$. By Lemma \ref{LeCritical}, $\phi$ has the maximum value, i.e., $\kappa_{_l}(n,3)=v(\phi)$.
We can therefore obtain the desired result.
\end{proof}

An assignment (or extender) $\phi$ is \emph{balanced} if $|A|=|B|$
implies $\phi(A)=\phi(B)$. Clearly if $m\leqslant 3$, then every
assignment is balanced. For $m\geqslant 4$, there will be unbalanced
assignments.

\begin{lemma}\label{LeBalanced}
Suppose that there is a balanced assignment with maximum value among
all assignments for a given $m$. An assignment $\phi$ has maximal
value if and only if there exists no balanced critical positive-extender
$\tau$ that is compatible with $\phi$.
\end{lemma}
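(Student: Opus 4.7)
The forward direction is immediate from Lemma~\ref{LeExtender}: if $\phi$ attains the maximum value, no positive extender is compatible with $\phi$, hence no balanced critical positive extender is.

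For the converse I argue the contrapositive. Suppose $\phi$ is not maximum and let $\phi^*$ be the balanced maximum assignment supplied by the hypothesis. The key tool is the natural action of the symmetric group $S_m$ on functions $f:2^M\to\mathbb{Z}$ given by $(\sigma f)(A)=f(\sigma^{-1}A)$, under which both (\ref{EqConstraint}) and (\ref{EqExtender}) are invariant, values are preserved, and balanced functions are precisely the $S_m$-fixed points. Since $\phi^*$ is $S_m$-fixed while $\phi$ in general is not, averaging the positive extender $\phi^*-\phi$ over $S_m$ produces a balanced rational positive extender; clearing the $m!$ denominator (with $\phi^*$ being integer-valued providing the room to stay inside the integer world) yields a balanced integer positive extender compatible with $\phi$. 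Among this nonempty finite collection, pick a $\preccurlyeq$-minimal balanced positive extender and call it $\tau_0$.

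The main obstacle, which I expect to be the delicate step, is verifying that $\tau_0$ is critical in the original unrestricted sense of Section~\ref{section: singleton} — i.e., no positive extender $\tau'\neq\tau_0$, possibly unbalanced, satisfies $\tau'\preccurlyeq\tau_0$ — rather than being merely minimal within the balanced class. My plan is: were such a $\tau'$ to exist, symmetrizing $\tau'$ over $S_m$ would yield a balanced rational extender $\bar\tau'$. Because $\tau_0$ is balanced, $\tau_0(A)$ depends only on $|A|$ and lower-bounds $\tau'(B)$ at every $B$ of size $|A|$ where $\tau'(B)<0$; hence $\bar\tau'\preccurlyeq\tau_0$ and $\bar\tau'\neq\tau_0$. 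A suitable integer rescaling then produces a balanced integer positive extender strictly below $\tau_0$ in the $\preccurlyeq$ ordering that is still compatible with $\phi$, contradicting the minimality of $\tau_0$. This exhibits $\tau_0$ as the sought balanced critical positive extender compatible with $\phi$ and completes the proof.
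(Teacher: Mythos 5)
Your forward direction is fine. The converse breaks at the very first symmetrization step: averaging the positive extender $\phi^*-\phi$ over $S_m$ does \emph{not} in general produce an extender compatible with $\phi$ when $\phi$ is unbalanced. Compatibility of $\bar\tau=\phi^*-\bar\phi$ with $\phi$ requires $\phi(A)+\phi^*(A)\geqslant\bar\phi(A)$ for every $A$, and $\bar\phi(A)$ averages $\phi$ over the whole $S_m$-orbit of $A$, which can be large even when $\phi(A)=\phi^*(A)=0$; clearing the $m!$ denominator only makes the negative entries of the extender more negative. The parenthetical about ``$\phi^*$ being integer-valued providing the room'' does not address compatibility at all.

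Moreover, the gap cannot be repaired, because for unbalanced $\phi$ the statement you are trying to prove is false. Take $m=4$, $l=1$, $n=102$, so the maximum value is $27$. Let $\phi(\emptyset)=3$, $\phi(\{i\})=24$ for $i=1,2,3$, $\phi(\{4\})=23$, $\phi(\{i,4\})=1$ for $i=1,2,3$, $\phi(\{1,2,3\})=1$, and $\phi(A)=0$ otherwise. One checks this is a $(1,4,102)$-assignment with $v(\phi)=26$, hence not maximal. A balanced positive extender $\tau$, with common value $a_j$ on sets of size $j$, that is compatible with $\phi$ must have $a_2,a_3,a_4\geqslant 0$ because $\phi$ vanishes on some set of each of these sizes; the pair equation $a_2+2a_3+a_4=0$ then forces $a_2=a_3=a_4=0$, whence $v(\tau)=a_1\geqslant 1$ and $\tau(\emptyset)=-4a_1\leqslant -4<-\phi(\emptyset)$. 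So \emph{no} balanced positive extender is compatible with this non-maximal $\phi$. The lemma is true --- and is only ever applied in the paper --- for \emph{balanced} $\phi$, and that is exactly what the paper's one-line proof uses: for balanced $\phi$ the difference $\phi^*-\phi$ is already a balanced positive extender compatible with $\phi$, with no averaging needed, and one then descends to a minimal balanced extender below it as in Lemma \ref{LeCritical}. Your final paragraph also chases a stronger property (criticality among all extenders rather than among balanced ones) than the argument needs, and the integer rescaling proposed there would again destroy the relation $\preccurlyeq\tau_0$, since scaling makes negative entries more negative.
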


\begin{proof}
Note that the difference of two balanced assignments is a balanced
extender. The assertion can be obtained similarly as the analysis of
Lemma \ref{LeExtender}.
\end{proof}

\begin{proof}[\bf{Proof of Theorem \ref{thm: m=4}}]
We first show that there is a balanced assignment for $m=4$.

\begin{claim}\label{ClBalancedM4}
There is a balanced assignment $\phi$ with maximum value among all
assignments.
\end{claim}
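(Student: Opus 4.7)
The plan is to symmetrize and round. The defining equations (\ref{EqConstraint}) are symmetric under the natural $S_4$-action on $M=\{1,2,3,4\}$: for any assignment $\phi$ and any $\sigma\in S_4$, the pullback $\phi_\sigma(A):=\phi(\sigma A)$ is again an assignment with the same value. Thus, starting from an arbitrary max-value assignment $\phi^*$, the rational average
\[
\bar\phi := \frac{1}{24}\sum_{\sigma\in S_4}\phi^*_\sigma
\]
is a balanced, nonnegative, rational solution of (\ref{EqConstraint}) with value $v^*:=v(\phi^*)$. If $\bar\phi$ happens to be integer-valued, the claim is immediate; in general, the remaining task is to round $\bar\phi$ to an integer balanced assignment of value $v^*$.

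Write $a_i:=\bar\phi(A)$ for $|A|=i$, so $a_i=s_i/\binom{4}{i}$ with $s_i:=\sum_{|B|=i}\phi^*(B)$, and the balanced constraints read $a_0+4a_1+6a_2+4a_3+a_4=n$, $a_2+2a_3+a_4=l$, $a_1+3a_2+3a_3+a_4=v^*$. The integer kernel of the corresponding homogeneous system is a rank-$2$ lattice spanned by $e_1=(3,-2,1,0,-1)$ and $e_2=(2,-1,0,1,-2)$, so adding $\alpha e_1+\beta e_2$ to $\bar\phi$ preserves balance, the parameters $n,l$, and the value. Choosing $\alpha=k_2-a_2$ and $\beta=k_3-a_3$ with $k_2,k_3\in\mathbb{Z}$ forces $b_2=k_2$ and $b_3=k_3$ integer; using the double-counting identities $s_1+2s_2+3s_3+4s_4=4v^*$ and $s_2+3s_3+6s_4=6l$ derived from (\ref{EqConstraint}), a direct computation gives
\[
b_0=n+3l-4v^*+3k_2+2k_3,\quad b_1=v^*-l-2k_2-k_3,\quad b_4=l-k_2-2k_3,
\]
which are automatically integer as well (in particular, the identity $s_2+3s_3+6s_4=6l$ forces $s_2+s_3\equiv 0\pmod{2}$, removing the only potential divisibility obstruction for $b_0$).

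The main obstacle is non-negativity: one must find $(k_2,k_3)\in\mathbb{Z}_{\geqslant 0}^2$ satisfying the three inequalities $3k_2+2k_3\geqslant 4v^*-n-3l$, $2k_2+k_3\leqslant v^*-l$, and $k_2+2k_3\leqslant l$. I expect to close this step by case analysis on the regime of $n$ relative to $l$, guided by where the continuous optimum of $v$ on the balanced polytope is attained. In the regime $n\geqslant 6l$ the choice $(k_2,k_3)=(l,0)$ suffices; in the intermediate regime $2l\leqslant n<6l$ one picks $(k_2,k_3)$ near $((n-2l)/4,(6l-n)/8)$; and in the regime $l+4\leqslant n<2l$ one picks $(k_2,k_3)$ near $(0,(n-l)/2)$, with small lattice adjustments to handle parities. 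Verifying in each regime that the selected $(k_2,k_3)$ meets all three inequalities and that the resulting $(b_0,\ldots,b_4)$ is a nonnegative integer tuple produces a balanced integer assignment of value $v^*$, establishing the claim.
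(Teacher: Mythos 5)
Your symmetrization step is sound (it is essentially what the paper itself does for the \emph{fractional} analogue in Theorem \ref{thm: real number upper bound}), but for the integer claim your argument has a genuine gap at the rounding stage. After averaging, all you know is that the planar polytope in $(k_2,k_3)$ cut out by $k_2,k_3\geqslant 0$, $k_2+2k_3\leqslant l$, $2k_2+k_3\leqslant v^*-l$ and $3k_2+2k_3\geqslant 4v^*-n-3l$ is nonempty, since it contains the rational point $(a_2,a_3)$. A nonempty two-dimensional polytope need not contain a lattice point (the triangle with vertices $(1/2,0)$, $(0,1/2)$, $(1/2,1/2)$ contains none), so the existence of an integer balanced assignment of value exactly $v^*$ does not follow and is precisely what still has to be proved. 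Your candidate points are left unverified, and verifying them is not routine because the constraints involve the unknown $v^*$: for example, with $(k_2,k_3)=(l,0)$ in the regime $n\geqslant 6l$ you need $b_0=n+6l-4v^*\geqslant 0$, i.e.\ $v^*\leqslant (n+6l)/4$, which is exactly the upper bound of Theorem \ref{thm: m=4} that this claim is a step towards; you would have to import it from an independent source (e.g.\ Theorem \ref{thm: upper bound for general l}) or restructure the argument. A minor point: the parity remark about $b_0$ is vacuous, since $v^*$ is the value of an integer assignment and hence an integer, so all $b_i$ are automatically integers once $k_2,k_3$ are.

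The paper takes a different and self-contained route that avoids rounding entirely: it stays inside integer assignments, picks a maximum-value assignment minimizing the spread $\max_{|A|=3}\phi(A)-\min_{|A|=3}\phi(A)$, and exhibits three explicit $0$-extenders (regulators) each of which strictly decreases this spread whenever it is positive; hence the spread is zero, and balance then follows from (\ref{EqConstraint}). If you wish to keep your symmetrize-and-round approach, the missing piece is a complete proof that the polytope above always contains a lattice point for every admissible triple $(n,l,v^*)$; as it stands, that is the entire difficulty of the claim and it has been deferred rather than resolved.
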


\begin{proof}
We will use the following regulators.
\begin{center}
{\footnotesize Table 3.}\\[1mm]
\begin{tabular}{c|c|c|c|c|c|c|c|c|c|c|c|c|c|c|c|c}
\hline\hline $A$ & $\emptyset$ & $1$ & $2$ & $3$ & $4$ & $12$ & $13$
& $14$ & $23$ & $24$ & $34$ & $123$ & $124$ & $134$ & $234$ & $M$\\
\hline $\rho_0$ & $1$ & $0$ & $-1$ & $-1$ & $-1$ & $0$ & $0$ & $0$ & $1$ & $1$ & $1$ & $0$ & $0$ & $0$ & $-1$ & $0$\\
\hline $\rho_1$ & $1$ & $1$ & $1$ & $-1$ & $-1$ & $-1$ & $0$ & $0$ & $0$ & $0$ & $1$ & $0$ & $0$ & $-1$ & $-1$ & $1$\\
\hline $\rho_2$ & $0$ & $0$ & $0$ & $0$ & $-1$ & $0$ & $0$ & $1$ & $0$ & $1$ & $1$ & $0$ & $-1$ & $-1$ & $-1$ & $1$\\
\hline\hline
\end{tabular}
\end{center}

Let $\phi$ be a maximum-value assignment such that
$$\varDelta_\phi=\max_{|A|=3}\phi(A)-\min_{|A|=3}\phi(A)$$ is as small as
possible. It is sufficient to show that $\varDelta_\phi=0$. Let
$x_i=\phi(M\backslash\{i\})$ for $i=1,\ldots,4$. One can compute by
(\ref{EqExtender}) that
$$
\phi(\{i\})=v(\phi)+\sum_{i=1}^4x_i-3l-2\phi(M)-x_i,~i=1,\ldots,4.
$$

If $x_1>\max\{x_2,x_3,x_4\}$, then $\phi(\{1\})<\phi(\{i\})$,
$i=2,3,4$, implying that $\phi(\{i\})\geqslant 1$, $i=2,3,4$. In
this case $\rho_0$ is compatible with $\phi$. It follows that
$\phi'=\phi+\rho_0$ has value $v(\phi')=v(\phi)$ and
$\varDelta_{\phi'}<\varDelta_\phi$, a contradiction. If
$x_1=x_2>\max\{x_3,x_4\}$, then $\phi(\{3\})=\phi(\{4\})\geqslant 1$
and $\phi\{1,2\}\geqslant 1$. In this case $\rho_1$ is compatible
with $\phi$. It follows that $\phi'=\phi+\rho_1$ has value
$v(\phi')=v(\phi)$ and $\varDelta_{\phi'}<\varDelta_\phi$, a
contradiction. If $x_1=x_2=x_3>x_4$, then $\phi(\{4\})\geqslant 1$.
In this case $\rho_2$ is compatible with $\phi$. It follows that
$\phi'=\phi+\rho_2$ has value $v(\phi')=v(\phi)$ and
$\varDelta_{\phi'}<\varDelta_\phi$, a contradiction. The other cases
are similarly. Thus we conclude that $\varDelta_\phi=0$. It follows
from (\ref{EqExtender}) that $\phi$ is balanced.
\end{proof}

Now we list the following extenders.
One can check that there exists no other balanced critical extender.
We omit the details here.
\begin{center}
{\footnotesize Table 4.}\\[1mm]
\begin{tabular}{c|c|c|c|c|c}
\hline\hline $A$ & $|A|=0$ & $|A|=1$ & $|A|=2$ & $|A|=3$ & $|A|=4$\\
\hline $\tau_0$ & $-2$ & $-1$ & $2$ & $-2$ & $2$\\
\hline $\tau_1$ & $0$ & $-2$ & $2$ & $-1$ & $0$\\
\hline $\tau_2$ & $-3$ & $0$ & $1$ & $-1$ & $1$\\
\hline $\tau_3$ & $2$ & $-3$ & $2$ & $0$ & $2$\\
\hline $\tau_4$ & $-1$ & $-1$ & $1$ & $0$ & $-1$\\
\hline $\tau_5$ & $-4$ & $1$ & $0$ & $0$ & $0$\\
\hline $\tau_6$ & $1$ & $-2$ & $1$ & $1$ & $-3$\\
\hline $\tau_7$ & $-2$ & $0$ & $0$ & $1$ & $-2$\\
\hline $\tau_8$ & $0$ & $-1$ & $0$ & $2$ & $-4$\\
\hline $\tau_{9}$ & $-1$ & $0$ & $-1$ & $3$ & $-5$\\
\hline $\tau_{10}$ & $0$ & $0$ & $-2$ & $5$ & $-8$\\
\hline\hline
\end{tabular}
\end{center}

Now we construct an assignment $\phi$ as follows.
If $l\leqslant n<2l$, then let
$$
\phi(A)=\left\{\begin{array}{ll}
  \rem(n-l,2),      & |A|=0;\\
  0,    & |A|=1;\\
  0,    & |A|=2;\\
  \lfloor(n-l)/2\rfloor,    & |A|=3;\\
  2l-n+\rem(n-l,2),     & |A|=4.
\end{array}\right.
$$
If $2l\leqslant n<6l-5$, then let $n-6l=-8q+r$, $0\leqslant r<8$, and
$$
\phi(A)=\left\{\begin{array}{ll}
  \rem(r,3),      & |A|=0;\\
  0,    & |A|=1;\\
  l-2q+\lfloor r/3\rfloor,    & |A|=2;\\
  q-\lfloor r/3\rfloor,    & |A|=3;\\
  \lfloor r/3\rfloor,     & |A|=4;
\end{array}\right.
$$
If $n=6l-5+r$, $0\leqslant r\leqslant 3$, then let
$$
\phi(A)=\left\{\begin{array}{ll}
  r,      & |A|=0;\\
  0,    & |A|=1;\\
  l-1,    & |A|=2;\\
  0,    & |A|=3;\\
  1,     & |A|=4.
\end{array}\right.
$$
If $n=6l-1$, then let
$$
\phi(A)=\left\{\begin{array}{ll}
  0,      & |A|=0;\\
  1,    & |A|=1;\\
  l-1,    & |A|=2;\\
  0,    & |A|=3;\\
  1,     & |A|=4.
\end{array}\right.
$$
If $n\geqslant 6l$, then let
$$
\phi(A)=\left\{\begin{array}{ll}
  \rem(n-6l,4),      & |A|=0;\\
  \lfloor(n-6l)/4\rfloor,    & |A|=1;\\
  l,    & |A|=2;\\
  0,    & |A|=3;\\
  0,     & |A|=4.
\end{array}\right.
$$

One can check that for each case,
any $\tau_i$, $i=0,\ldots,10$,
is not compatible with $\phi$.
By Claim \ref{ClBalancedM4} and Lemma \ref{LeBalanced},
$\phi$ has maximum value, i.e., $\kappa_{_l}(n,4)=v(\phi)$.
One can compute the desired result.
\end{proof}

\begin{proof}[\bf{Proof of Theorem \ref{thm: a lower bound}}]
Let $i$ be an arbitrary integer with $2\leqslant i\leqslant m$. Let
$\phi_i$ be an
$$
\left(l-{m-2\choose i-2},m,n-{m\choose i}\right)\mbox{-}assignment
$$
such that $v(\phi_i)$ is maximum, and let
$\pi_i:2^M\rightarrow\mathbb{N}$ be a function such that
$$\pi_i(A)=\left\{\begin{array}{ll}
1, & |A|=i;\\
0, & \mbox{ otherwise}.
\end{array}\right.$$
Then $\phi=\phi_i+\pi_i$ is an $(l,m,n)$-assignment with
$v(\phi)=v(\phi_i)+{m-1\choose i-1}$. Thus the assertion holds.
\end{proof}

\begin{proof}[\bf{Proof of Theorem \ref{thm: real number upper bound}}]
Let $\mathcal{F}=\{F_1,\ldots,F_m\}$ be a fractional $\{l\}$-intersecting uniform family of $X$.
As in the previous case, we define a function
$\phi_\mathcal{F}: 2^M\rightarrow \mathbb{R}^+\cup\{0\}$ such that
$$\phi(A)=\lambda(\{x\in X: \{i\in M: x\in F_i\}=A\}), \mbox{ for all
}A\subseteq M.$$ Thus $\phi=\phi_\mathcal{F}$ satisfies
\begin{equation} \label{EqReal}
\left\{\begin{split}
\sum_{A\subseteq M}\phi(A) &=n;\\
\sum_{x\in A}\phi(A) &=\sum_{y\in A}\phi(A), \mbox{ for all }x,y\in M;\\
\sum_{x,y\in A}\phi(A) &=l, \mbox{ for all }x,y\in M.
\end{split}\right.
\end{equation}
Now we will find the maximum value $v(\phi)$ among all assignments
satisfying (\ref{EqReal}). Recall that $\phi$ is balanced if
$|A|=|B|$ implies $\phi(A)=\phi(B)$ for all $A,B\subseteq M$.

\begin{claim}
There is a balanced assignment with maximum value.
\end{claim}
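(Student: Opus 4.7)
The plan is a standard symmetrization argument via averaging over the symmetric group $S_m$. The key observation is that both the constraint system (\ref{EqReal}) and the functional $v(\phi)$ are invariant under the natural action of $S_m$ on $2^M$. For $\sigma\in S_m$ and $\phi\colon 2^M\to\mathbb{R}_{\geqslant 0}$, define $\phi^\sigma(A)=\phi(\sigma^{-1}(A))$. The first equation of (\ref{EqReal}) is obviously preserved by this relabelling; the second and third equations already quantify over \emph{all} pairs $x,y\in M$, so substituting $\sigma(x),\sigma(y)$ for $x,y$ shows they are preserved as well. Since $v(\phi)=\frac{1}{m}\sum_{A\subseteq M}|A|\,\phi(A)$ depends on $A$ only through $|A|$, we have $v(\phi^\sigma)=v(\phi)$.

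Next I would argue that the supremum is attained. The feasible set is cut out from the nonnegative orthant $\mathbb{R}_{\geqslant 0}^{2^M}$ by finitely many linear equalities, one of which is $\sum_A\phi(A)=n$; this forces $\phi(A)\in[0,n]$ for every $A$, so the feasible region is a compact polytope and the linear functional $v$ attains its maximum at some assignment $\phi^{\ast}$.

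Now I would symmetrize $\phi^{\ast}$ by setting
$$
\bar\phi(A)\;=\;\frac{1}{m!}\sum_{\sigma\in S_m}\phi^{\ast}\!\left(\sigma^{-1}(A)\right), \qquad A\subseteq M.
$$
By the first paragraph, each $(\phi^{\ast})^\sigma$ is itself a nonnegative solution of (\ref{EqReal}) with value $v(\phi^{\ast})$. Since the feasible region is convex and $v$ is linear, $\bar\phi$ is a feasible assignment with $v(\bar\phi)=v(\phi^{\ast})$, so it is optimal. By construction $\bar\phi(A)$ depends only on the $S_m$-orbit of $A$, i.e.\ only on $|A|$; hence $\bar\phi$ is balanced, proving the claim.

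The argument has no real obstacle: the only point requiring care is the verification in the first paragraph that the $S_m$-action preserves (\ref{EqReal}) and $v$, which reduces to a change-of-variables inside the universally quantified equations. Everything after that is routine convex-combination bookkeeping, and the use of compactness to ensure the maximum is attained is justified because we are in the fractional setting where $\phi$ ranges over real-valued functions.
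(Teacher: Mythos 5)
Your proposal is correct and follows essentially the same symmetrization argument as the paper: average the optimal assignment over the symmetric group on $M$, using invariance of the constraints and linearity of $v(\phi)$ to conclude the average is feasible, optimal, and balanced. The only addition is your explicit compactness justification that the maximum is attained, which the paper leaves implicit.
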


\begin{proof}
Let $\phi$ be an assignment with maximum value, and let $\varOmega$
be the symmetric group on $M$. For any $\sigma\in\varOmega$, we
define $\phi_\sigma$ as $$\phi_\sigma(A)=\phi(\sigma(A)), \mbox{ for
all }A\subseteq M.$$ Clearly $v(\phi_\sigma)=v(\phi)$, implying that
$\phi_\sigma$ has maximum value for all $\sigma\in\varOmega$. It
follows that
$$\phi^*=\frac{1}{m!}\sum_{\sigma\in\varOmega}\phi_\sigma$$
has value $v(\phi^*)=v(\phi)$, the maximum value as well.
It is not difficult to see that $\phi^*$ is balanced. This proves the claim.
\end{proof}

Now let $\phi$ be a balanced assignment with maximum value. For
convenience, we define
$$
\varphi: M^*=M\cup\{0\}\rightarrow\mathbb{R}^+\cup\{0\}
$$
such that $\varphi(i)=\phi(A)$ for all $A$ of size $i$. Therefore
\begin{equation} \label{EqVarphi}
\left\{\begin{split}
& \sum_{i=0}^m\dbinom{m}{i}\varphi(i)=n;\\
& \sum_{i=2}^m\dbinom{m-2}{i-2}\varphi(i)=l.
\end{split}\right.
\end{equation}

\begin{claim}\label{ClNeqzero}
There exists $s$, $0\leqslant s\leqslant m-1$ such that $\varphi(i)=0$ for all
$i\in M^*\backslash\{s,s+1\}$.
\end{claim}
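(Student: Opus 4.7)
The plan is to recast the problem as a continuous linear program and then exploit the geometric observation that certain auxiliary points lie on a concave curve.

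Since $\phi$ is balanced, for any fixed $x\in M$ the number of $i$-subsets of $M$ containing $x$ is $\binom{m-1}{i-1}$, so $v(\phi)=\sum_{i=1}^{m}\binom{m-1}{i-1}\varphi(i)$. Together with (\ref{EqVarphi}), maximizing $v(\phi)$ becomes a linear program in the $m+1$ nonnegative real variables $\varphi(0),\ldots,\varphi(m)$ subject to two equality constraints. The feasible set is a nonempty bounded polytope, so the maximum is attained at a vertex, and standard linear-programming theory then gives that at any vertex at most two of the $\varphi(i)$ are nonzero.

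The remaining task is to show that the two nonzero indices must be consecutive. Perform the substitution $\psi(i)=\binom{m}{i}\varphi(i)$ and introduce the planar points
$$
P_i=(x_i,y_i)=\left(\frac{i(i-1)}{m(m-1)},\;\frac{i}{m}\right),\qquad i=0,1,\ldots,m.
$$
The LP is then equivalent to the following: among all probability distributions $q$ on $\{P_0,\ldots,P_m\}$ with prescribed $x$-mean $l/n$, maximize the $y$-mean; the maximum value of $v(\phi)$ is $n$ times this quantity. A direct substitution shows that each $P_i$ satisfies $my_i^{2}-y_i-(m-1)x_i=0$, so all $P_i$ lie on a common parabola. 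More precisely, $P_1,\ldots,P_m$ lie on the upper branch $y=(1+\sqrt{1+4m(m-1)x})/(2m)$, which is a strictly concave function of $x$, while $P_0=(0,0)$ lies on the lower branch and is dominated by $P_1=(0,1/m)$. Consequently the upper boundary of the convex hull of $\{P_0,\ldots,P_m\}$ is exactly the polygonal line through $P_1,P_2,\ldots,P_m$ in order, and any point on this upper boundary is a convex combination of two consecutive vertices. The optimal $q$ is therefore supported on some $\{P_s,P_{s+1}\}$, which translates back to $\varphi(i)=0$ for $i\notin\{s,s+1\}$.

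The main technical step is the envelope argument itself: if $\varphi(s),\varphi(t)>0$ with $t\ge s+2$, one must exhibit a feasible perturbation strictly increasing $v(\phi)$. I would shift a small amount of $\psi$-mass from $P_s$ and $P_t$ onto $P_{s+1}$ and $P_{t-1}$, with the amounts chosen so as to preserve both equality constraints; strict concavity of the upper branch then forces the corresponding change in $y$-mean to be positive, contradicting optimality. A minor edge case concerns the index $0$: when $l>0$, the point $P_0$ is strictly dominated by $P_1$ and so $\varphi(0)=0$ may be assumed, whereas $l=0$ forces the support into $\{0,1\}$, corresponding to $s=0$.
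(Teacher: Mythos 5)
Your proof is correct, and the overall skeleton --- assume two positive coordinates $\varphi(r),\varphi(t)$ with $t\geqslant r+2$, perform a constraint-preserving exchange toward intermediate indices, and derive a strict increase of $v(\varphi)$ --- is the same exchange argument the paper uses. Where you genuinely diverge is in how the key inequality is established. The paper isolates this as Lemma \ref{LeRst} and proves it by solving the $2\times 2$ system with Cramer's rule and reducing, after cancellation of binomial factors, to the elementary inequality $\frac{t-r}{s}<\frac{s-r}{t}+\frac{t-s}{r}$ (with a separate computation for $r=0$). You instead rescale by $\psi(i)=\binom{m}{i}\varphi(i)$ and observe that the resulting points $P_i=\bigl(\tfrac{i(i-1)}{m(m-1)},\tfrac{i}{m}\bigr)$ all satisfy $my^2-y-(m-1)x=0$, so that $P_1,\ldots,P_m$ sit on the strictly concave upper branch of a parabola while $P_0$ sits on the lower branch directly beneath $P_1$; Lemma \ref{LeRst} is then literally the statement that $P_s$ lies strictly above the chord $P_rP_t$, and your handling of $P_0$ matches the paper's separate treatment of $r=0$. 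This buys a conceptual explanation of why the optimum concentrates on consecutive indices (the optimum value is the concave envelope of the $P_i$ evaluated at $l/n$, which also makes the formulas for $s$, $t$, $\alpha$, $\beta$ in Theorem \ref{thm: real number upper bound} transparent), at the cost of needing the change of variables. Two small remarks: your preliminary linear-programming step (a vertex of a polytope cut out by two equalities has at most two nonzero coordinates) is not needed once you have the concave-envelope argument, and it alone would not give consecutiveness; and in your final perturbation onto $P_{s+1}$ and $P_{t-1}$ you should note that one may always choose the removed masses so that their $x$-barycenter lands in $[x_{s+1},x_{t-1}]$, which guarantees nonnegative solutions for the added masses --- a detail the paper avoids by pushing all the mass onto a single intermediate index.
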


\begin{proof}
We need the following fact.

\begin{lemma}\label{LeRst}
Suppose that $0\leqslant r<s<t\leqslant m$ are integers and $\alpha,\beta>0$
are real numbers. If
\begin{equation} \label{EqAlpha}
\left\{\begin{split}
& \dbinom{m}{r}\alpha+\dbinom{m}{t}\beta=\dbinom{m}{s},\\
& \dbinom{m-2}{r-2}\alpha+\dbinom{m-2}{t-2}\beta=\dbinom{m-2}{s-2},
\end{split}\right.
\end{equation}
then
$$\dbinom{m-1}{r-1}\alpha+\dbinom{m-1}{t-1}\beta<\dbinom{m-1}{s-1}.$$
\end{lemma}

\begin{proof}
If $r=0$, then
$$\dbinom{m-1}{t-1}\beta=\dbinom{m-1}{t-1}\dbinom{m-2}{s-2}/\dbinom{m-2}{t-2}=\dfrac{s-1}{t-1}\dbinom{m-1}{s-1}<\dbinom{m-1}{s-1}.$$
Thus we assume that $r\geqslant 1$. By (\ref{EqAlpha}), we can solve that
$$\alpha=\dfrac{\left|\begin{array}{cc}
  \dbinom{m}{s} & \dbinom{m}{t}\\
  \dbinom{m-2}{s-2} & \dbinom{m-2}{t-2}
\end{array}\right|}{\left|\begin{array}{cc}
  \dbinom{m}{r} & \dbinom{m}{t}\\
  \dbinom{m-2}{r-2} & \dbinom{m-2}{t-2}
\end{array}\right|}, \mbox{ and }\beta=\dfrac{\left|\begin{array}{cc}
  \dbinom{m}{r} & \dbinom{m}{s}\\
  \dbinom{m-2}{r-2} & \dbinom{m-2}{s-2}
\end{array}\right|}{\left|\begin{array}{cc}
  \dbinom{m}{r} & \dbinom{m}{t}\\
  \dbinom{m-2}{r-2} & \dbinom{m-2}{t-2}
\end{array}\right|}.$$
Thus the assertion is implied by

\begin{equation*}\begin{split}
  \dbinom{m-1}{r-1}\left|\begin{array}{cc}
  \dbinom{m}{s} & \dbinom{m}{t}\\
  \dbinom{m-2}{s-2} & \dbinom{m-2}{t-2}
\end{array}\right| & +\dbinom{m-1}{t-1}\left|\begin{array}{cc}
  \dbinom{m}{r} & \dbinom{m}{s}\\
  \dbinom{m-2}{r-2} & \dbinom{m-2}{s-2}
\end{array}\right|\\
& <\dbinom{m-1}{s-1}\left|\begin{array}{cc}
  \dbinom{m}{r} & \dbinom{m}{t}\\
  \dbinom{m-2}{r-2} & \dbinom{m-2}{t-2}
  \end{array}\right|.\end{split}
\end{equation*}
By taking a factor
$\dfrac{m}{m-1}\dbinom{m-1}{r-1}\dbinom{m-1}{s-1}\dbinom{m-1}{t-1}$,
we obtain
$$  \left|\begin{array}{cc}
  \dfrac{1}{s} & \dfrac{1}{t}\\
  s-1 & t-1
\end{array}\right|+\left|\begin{array}{cc}
  \dfrac{1}{r} & \dfrac{1}{s}\\
  r-1 & s-1
\end{array}\right|
  <\left|\begin{array}{cc}
  \dfrac{1}{r} & \dfrac{1}{t}\\
  r-1 & t-1
  \end{array}\right|.$$
That is
$$\frac{t-r}{s}<\frac{s-r}{t}+\frac{t-s}{r},$$ which can be checked directly.
\end{proof}

Now we prove the claim. Suppose that there are $0\leqslant r,t\leqslant m$
with $t\geqslant r+2$ such that $\varphi(r),\varphi(t)\neq 0$. Let $s$ be
an integer such that $r<s<t$. Let $\alpha,\beta$ be the solution of
(\ref{EqAlpha}). We take a coefficient $c$ such that
$c\alpha\leqslant\varphi(r)$ and $c\beta\leqslant\varphi(t)$. Now we let
$\varphi'$ be an assignment such that
$$\varphi'(i)=\left\{\begin{array}{ll}
  \varphi(i)-c\alpha,   & i=r;\\
  \varphi(i)+c,         & i=s;\\
  \varphi(i)-c\beta,    & i=t;\\
  \varphi(i),           & \mbox{otherwise}.
\end{array}\right.$$
By Lemma \ref{LeRst}, $\varphi'$ is an assignment with
$v(\varphi')>v(\varphi)$, a contradiction.
\end{proof}

\begin{claim}\label{ClGeqleq}
There exist $0\leqslant s,t\leqslant m$ with $\varphi(s),\varphi(t)>0$ and satisfying that
$$\dbinom{m-2}{s-2}/\dbinom{m}{s}\leqslant\frac{l}{n}, \mbox{ and }
\dbinom{m-2}{t-2}/\dbinom{m}{t}\geqslant\dfrac{l}{n}.$$ Moreover,
the first inequality is strict if and only if the second inequality
is strict.
\end{claim}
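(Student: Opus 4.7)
The approach is to recognise $l/n$ as a convex combination of the ratios $r_i := \binom{m-2}{i-2}/\binom{m}{i}$ (using the convention $\binom{m-2}{i-2}=0$, hence $r_i=0$, for $i\in\{0,1\}$), and then to choose $s$ and $t$ as indices where this convex combination attains its minimum and maximum values over the support of $\varphi$.

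First, dividing the two identities in (\ref{EqVarphi}) yields
$$\frac{l}{n}=\frac{\sum_{i:\varphi(i)>0} r_i\, w_i}{\sum_{i:\varphi(i)>0} w_i},\qquad w_i:=\binom{m}{i}\varphi(i)>0.$$
Since $n>0$, the support of $\varphi$ is nonempty, so the denominator is positive. The right-hand side is a positively weighted convex combination of the values $\{r_i:\varphi(i)>0\}$, which immediately gives $\min_i r_i\leqslant l/n\leqslant \max_i r_i$ with the extrema taken over the support of $\varphi$. Taking $s$ to realise this minimum and $t$ to realise this maximum produces indices with $\varphi(s),\varphi(t)>0$ satisfying both required inequalities.

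For the \emph{moreover} clause, a convex combination with strictly positive weights equals its minimum entry only when all entries coincide (and symmetrically for the maximum). Hence $r_s=l/n$ forces $r_i=l/n$ for every $i$ in the support of $\varphi$, and in particular $r_t=l/n$; the reverse implication is symmetric. I do not expect any genuine obstacle here, since the whole argument is a one-line convexity observation once the weighted-average identity is written down. Note also that Claim \ref{ClNeqzero} is not used in this step — the proof relies only on the two identities in (\ref{EqVarphi}) together with the nonnegativity of $\varphi$.
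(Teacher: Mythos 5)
Your proof is correct and is essentially the paper's argument: the paper also compares $\sum_i\binom{m-2}{i-2}\varphi(i)=l$ with $\frac{l}{n}\sum_i\binom{m}{i}\varphi(i)=l$ and derives a contradiction if every ratio in the support lies strictly on one side of $l/n$ (or weakly with one strict), which is exactly your weighted-average observation phrased contrapositively. Your convex-combination framing, including the handling of the ``moreover'' clause via equality cases of the average, matches the paper's reasoning in substance.
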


\begin{proof}
If for all $i$ with $\varphi(i)>0$,
either $\dbinom{m-2}{i-2}/\dbinom{m}{i}>\dfrac{l}{n}$;
or $\dbinom{m-2}{i-2}/\dbinom{m}{i}\geqslant\dfrac{l}{n}$ and
there exists an integer $t$ with $\dbinom{m-2}{t-2}/\dbinom{m}{t}>\dfrac{l}{n}$,
then by
(\ref{EqVarphi}),
$$\sum_{i=0}^m\dbinom{m-2}{i-2}\varphi(i)>\sum_{i=1}^m\dbinom{m}{i}\dfrac{l}{n}\varphi(i)=l,$$
a contradiction. The second assertion can be proved similarly.
\end{proof}

Now let $r$ be the positive solution of
$\dbinom{m-2}{r-2}/\dbinom{m}{r}=\dfrac{l}{n}$. By Claims
\ref{ClNeqzero} and \ref{ClGeqleq}, if $r$ is an integer,
then $s=t=r$; if $r$ is not an integer, then $s=\lfloor r\rfloor$
and $t=\lceil r\rceil$. Now the theorem can be deduced by
(\ref{EqVarphi}).
\end{proof}

\begin{proof}[\bf{Proof of Theorem \ref{thm: sqrt n}}]
We first show the limit part of the theorem.
It is not difficult to check that the limit of the upper bound is one.
For the limit of the lower bound,
one can see that it is a consequence of the following lemma,
which can be deduced from one result of Dusart in \cite{Du2018}.

\begin{lemma}
$\lim\limits_{x\rightarrow\infty}\dfrac{p(x)}{x}=1$.
\end{lemma}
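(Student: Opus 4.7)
The plan is to establish the two-sided bound $1 \le p(x)/x \le 1+\varepsilon$ for arbitrary $\varepsilon > 0$ and all sufficiently large $x$. The lower half is immediate from the definition: since $p(x)$ is, by construction, a prime at least as large as $x$, we have $p(x) \ge x$ and hence $p(x)/x \ge 1$ for every $x$. So it remains only to attack the upper half, namely to show that for every $\varepsilon>0$ there exists $x_0=x_0(\varepsilon)$ such that $p(x) \le (1+\varepsilon)x$ whenever $x\ge x_0$.

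To produce that upper bound, I would reformulate the inequality $p(x)\le (1+\varepsilon)x$ as the statement that the interval $[x,(1+\varepsilon)x]$ contains at least one prime. Thus the lemma is equivalent to the following claim: for every $\varepsilon>0$, every sufficiently large interval of multiplicative width $1+\varepsilon$ contains a prime. This is exactly the type of prime-gap statement that Dusart's work provides explicitly and unconditionally.

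Next I would invoke the cited result of Dusart~\cite{Du2018}, which asserts that for $x$ larger than an explicit threshold, the interval
\[
\bigl[x,\; x\bigl(1+\tfrac{1}{\ln^{3} x}\bigr)\bigr]
\]
always contains a prime (the precise form of the gap function $1/\ln^3 x$ is not essential; any Dusart-style bound of the form $x(1+o(1))$ would do). Given $\varepsilon>0$, I choose $x_0$ large enough that both the Dusart threshold is met and $1/\ln^{3} x < \varepsilon$ holds for all $x\ge x_0$. Then for any $x\ge x_0$, there is a prime $q$ with
\[
x \le q \le x\bigl(1+\tfrac{1}{\ln^{3} x}\bigr) \le (1+\varepsilon)x,
\]
and by minimality $p(x)\le q\le (1+\varepsilon)x$, which is what we needed.

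The main obstacle is really just transcribing Dusart's explicit prime-gap estimate into a form that matches what is invoked; there is no genuine analytic content to be reconstructed from scratch, as the quantitative heavy lifting is done entirely by~\cite{Du2018}. In particular, this argument avoids any appeal to the prime number theorem proper, using only the explicit bounds on prime gaps already available in the reference.
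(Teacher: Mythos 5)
Your argument is correct and matches the paper's approach exactly: the paper states this lemma without proof, merely noting that it "can be deduced from one result of Dusart in \cite{Du2018}", and your write-up is precisely that deduction (the exact shape of Dusart's gap function differs slightly from what you quote, but as you note, any explicit bound of the form $x(1+o(1))$ suffices). No issues.
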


Now we show the upper bound and the lower bound of
$\kappa_{_{\leqslant 1}}(n,n)$. For the upper bound, let
$\mathcal{F}$ be a $k$-uniform $\{0,1\}$-intersecting $n$-family of
subsets of $[n]$. By Theorem \ref{thm: an upper bound for at most
l},
$$n\leqslant \frac{n(n-1)}{k(k-1)}.$$ Thus we have
$k\leqslant \sqrt{n-3/4}+1/2$.

For the lower bound, we first show the following claim.

\begin{claim}\label{claim}
Let $p$ be a prime and let $t<p$ be a positive integer. Then
$\kappa_{_{\leqslant 1}}(p^2-tp,p^2-tp)\geqslant p-t$.
\end{claim}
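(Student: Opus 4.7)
The plan is to reuse the polynomial (Vandermonde) construction from the proof of Theorem~\ref{ThPrime} with $l=1$ and truncate it. Specifically, I would start with the ground set $X=\{(x,y):0\leqslant x,y<p\}$ of size $p^2$ and the family $\mathcal{F}=\{F_{a_0,a_1}:0\leqslant a_0,a_1<p\}$, where $F_{a_0,a_1}=\{(x,y)\in X:y\equiv a_0+xa_1\pmod{p}\}$; by that theorem this is a $p$-uniform $\{0,1\}$-intersecting family consisting of $p^2$ sets.

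To obtain a family on a smaller ground set with smaller members, I would \emph{delete $t$ columns}: restrict to $X'=\{(x,y)\in X:0\leqslant x<p-t\}$, so $|X'|=(p-t)p=p^2-tp$. For each $(a_0,a_1)$, the restricted set $F'_{a_0,a_1}:=F_{a_0,a_1}\cap X'$ contains the unique point $(x,\,a_0+xa_1\bmod p)$ for each $x\in\{0,\ldots,p-t-1\}$, hence has size exactly $p-t$. Since $F'_{a_0,a_1}\cap F'_{b_0,b_1}\subseteq F_{a_0,a_1}\cap F_{b_0,b_1}$, pairwise intersections of distinct restricted sets still lie in $\{0,1\}$.

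The only point that really needs checking is that enough of the restricted sets are distinct. If $p-t\geqslant 2$, then two distinct affine polynomials over $\mathbb{F}_p$ coincide on at most one value of $x$, so all $p^2$ restricted sets are pairwise distinct and one may select any $p^2-tp$ of them. In the edge case $t=p-1$, the restriction keeps only the column $x=0$, so $F'_{a_0,a_1}=\{(0,a_0)\}$ depends only on $a_0$ and yields exactly $p=p^2-(p-1)p$ distinct singletons, which is precisely the required number.

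Putting these pieces together produces a $(p-t)$-uniform $\{0,1\}$-intersecting family of size $p^2-tp$ on a ground set of size $p^2-tp$, giving $\kappa_{_{\leqslant 1}}(p^2-tp,p^2-tp)\geqslant p-t$. The argument is essentially a direct truncation of the known construction; the only mild obstacle is the edge case $t=p-1$, which must be handled separately because the linear polynomials no longer separate the pairs $(a_0,a_1)$ after restriction.
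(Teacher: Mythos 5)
Your proof is correct and takes essentially the same approach as the paper: both truncate the affine-lines family $\{F_{a,b}\}$ from the proof of Theorem~\ref{ThPrime} (with $l=1$) to the first $p-t$ columns and observe that uniformity, the $\{0,1\}$-intersection property, and the required number of members survive the restriction. You are in fact slightly more careful than the paper, which does not explicitly verify distinctness of the restricted sets (your $t=p-1$ edge case).
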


\begin{proof}
Set $X=\{(x,y): 0\leqslant x,y<p\}$. From the proof of Theorem
\ref{ThPrime}, we can see that the family $\mathcal{F}=\{F_{a,b}:
0\leqslant a,b<p\}$ is a $p$-uniform $\{0,1\}$-intersecting $p^2$-family
of subsets of $X$, where
$$
F_{a,b}=\{(x,y): y\equiv a+bx\pmod p\}.
$$

Let $X'=\{(x,y): 0\leqslant x<p-t, 0\leqslant y<p\}$. So $X'$ is a
subset of $X$ of size $p^2-tp$. Let $F'_{a,b}=F_{a,b}\cap X'$ and
$\mathcal{F}'=\{F'_{a,b}: F_{a,b}\in\mathcal{F}\}$. Clearly
$\mathcal{F}'$ is a $(p-t)$-uniform $\{0,1\}$-intersecting family
with $p^2\geqslant p^2-tp$ members. This implies that
$\kappa_{_{\leqslant 1}}(p^2-tp,p^2-tp)\geqslant p-t$.
\end{proof}

Now let $p=p(\sqrt{n})$ and $t=p-\lfloor n/p\rfloor$. Thus
$n\geqslant p^2-tp$. Recall that $\kappa_{_{\leqslant 1}}(n,n)$
is an increasing function for $n$. By Claim \ref{claim},
$$
\kappa_{_{\leqslant 1}}(n,n)\geqslant \kappa_{\leqslant
1}(p^{2}-tp,p^{2}-tp)\geqslant
p-t=\left\lfloor\frac{n}{p}\right\rfloor.
$$
\end{proof}

\begin{definition}
A {\em projective plane} consists of a set of points, a set of
lines, and a relation between points and lines called incidence,
having the following properties:\\
$(1)$ Given any two distinct points, there is exactly one line incident with both of them;\\
$(2)$ Given any two distinct lines, there is exactly one point incident with both of them;\\
$(3)$ There are four points such that no line is incident with more
than two of them.
\end{definition}

It is not difficult to see that for every projective plane $\mathcal{P}$,
there exists an integer $q$ such that each point is incident with $q+1$ lines and each line is incident with $q+1$ points.
Such an integer $q$ is the \emph{order} of $\mathcal{P}$.
One can check that a projective plane of order $q$ has $q^2+q+1$ points and $q^2+q+1$ lines.
The following well-known result on the existence of finite projective planes will be used.

\begin{lemma}[see \cite{Coxeter}]
The projective plane of order $q$ exists if $q$ is prime power.
\end{lemma}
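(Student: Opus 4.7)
The plan is to construct an explicit projective plane of order $q$ using the finite field $\mathbb{F}_q$, whose existence for every prime power $q$ is a standard fact from field theory. The construction will be $PG(2,q)$: take the vector space $V = \mathbb{F}_q^3$, declare \emph{points} to be the $1$-dimensional subspaces of $V$, \emph{lines} to be the $2$-dimensional subspaces of $V$, and declare a point $P$ to be \emph{incident} with a line $\ell$ precisely when $P \subseteq \ell$. The entire verification then reduces to elementary linear algebra over $\mathbb{F}_q$.

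I will then check the three axioms in order. For axiom (1), two distinct $1$-dimensional subspaces of $V$ are spanned by linearly independent vectors, so they span a unique $2$-dimensional subspace, giving exactly one line through them. For axiom (2), the dimension formula $\dim(U_1 + U_2) + \dim(U_1 \cap U_2) = \dim U_1 + \dim U_2$ applied to two distinct $2$-dimensional subspaces $U_1, U_2 \subseteq V$ forces $\dim(U_1 + U_2) = 3$ and hence $\dim(U_1 \cap U_2) = 1$, yielding a unique common point. For axiom (3), I will exhibit the four points $[1{:}0{:}0]$, $[0{:}1{:}0]$, $[0{:}0{:}1]$, and $[1{:}1{:}1]$ and observe that any three of them are linearly independent as vectors of $\mathbb{F}_q^3$, so no $2$-dimensional subspace contains three of them, i.e., no line passes through three.

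To identify the order of this plane as $q$, I will count: a $2$-dimensional subspace of $\mathbb{F}_q^3$ has $q^2$ vectors and hence $(q^2-1)/(q-1) = q+1$ one-dimensional subspaces, so every line contains exactly $q+1$ points. Dually, by (2) the lines through a fixed point are in bijection with the $1$-dimensional subspaces of the quotient $V/P \cong \mathbb{F}_q^2$, giving $q+1$ lines through each point. Hence the plane has order $q$, and a simple double-count confirms the expected totals of $q^2+q+1$ points and $q^2+q+1$ lines.

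There is essentially no real obstacle here beyond invoking the existence of $\mathbb{F}_q$ for prime powers $q$ (Galois); all remaining verifications are short dimension counts. The only small subtlety is making sure the four chosen points are genuinely in general position over arbitrary $\mathbb{F}_q$, which is transparent from the fact that the $3\times 3$ minors of the corresponding coordinate matrix are $\pm 1$, hence nonzero in every field.
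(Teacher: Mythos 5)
Your construction of $PG(2,q)$ from the $1$- and $2$-dimensional subspaces of $\mathbb{F}_q^3$ is correct and complete, and it is the standard argument behind the cited result (the paper itself states this lemma without proof, referring to Coxeter). All three axioms and the order computation check out, so nothing further is needed.
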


\begin{proof}[\bf{Proof of Theorem \ref{thm: sharpness of an upper bound}}]
Assume first that $n=q^{2}+q+1$. From Theorem \ref{thm: sqrt n}, we
have
$$
\kappa_{\leqslant 1}(q^{2}+q+1,q^{2}+q+1)\leqslant \sqrt{q^{2}+q+\frac{1}{4}}+\frac{1}{2}=q+1.
$$
Note that a projective plane of order $q$ is a $(q+1)$-uniform
$\{0,1\}$-intersecting $(q^{2}+q+1)$-family. Thus the
equality holds in the above inequality.

Now assume that $n\in[q^{2},q^{2}+q]$. From Theorem \ref{thm: sqrt
n}, we have $\kappa_{\leqslant 1}(n,n)\leqslant q$. Let
$X=[n]=\{1,\ldots,n\}$ and let $X'=\{1,\ldots,q^{2}+q,q^{2}+q+1\}$.
Since $\kappa_{_{\leqslant 1}}(q^{2}+q+1,q^{2}+q+1)=q+1$, there exists a
$(q+1)$-uniform family, say
$\mathcal{F}'=\{F'_1,\ldots,F'_{q^2+q+1}\}$ such that each two
member of $\mathcal{F}$ intersects on at most one element. Assume
without loss of generality that
$F'_{q^{2}+q+1}=\{q^{2}+1,\ldots,q^{2}+q+1\}$. For each $1\leqslant
i\leqslant n$, let $F_i$ be a set obtained from $F'_i$ by removing
its largest number. Since $|F'_{i}\cap F'_{q^{2}+q+1}|\leqslant 1$,
we have $F_{i}\subseteq \{1,\ldots,q^{2}\}\subseteq X$. Clearly,
$\mathcal{F}=\{F_1,\ldots,F_n\}$ is a $q$-uniform
$\{0,1\}$-intersecting family. So $\kappa_{_{\leqslant 1}}(n,n)=q$
for $n\in[q^{2},q^{2}+q]$.
\end{proof}

\section{Concluding remarks}\label{section: concluding}

We conclude this paper by proposing a conjecture on
estimating the maximum size of a member in a family among all uniform $L$-intersecting $m$-families of subsets of $[n]$ with $m=n$ and $L=\{0,1,\ldots,l\}$.

\begin{conjecture}
Let $l\geqslant 1$ be an integer.
Then
$\kappa_{_{\leqslant l}}(n,n)=(1+o(1))n^{\frac{l}{l+1}}$, i.e.,
$$
\lim_{n\rightarrow \infty}\frac{\kappa_{_{\leqslant l}}(n,n)}{n^{l/(l+1)}}=1.
$$
\end{conjecture}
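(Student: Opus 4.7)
The upper bound is essentially free: if $\mathcal{F}$ is a $k$-uniform $\{0,1,\ldots,l\}$-intersecting $n$-family of subsets of $[n]$, then Theorem \ref{thm: an upper bound for at most l} gives $n\leqslant\frac{n(n-1)\cdots(n-l)}{k(k-1)\cdots(k-l)}$, i.e., $k(k-1)\cdots(k-l)\leqslant(n-1)(n-2)\cdots(n-l)$. For $n\to\infty$, the left side is $k^{l+1}(1+o(1))$ and the right is $n^l(1+o(1))$, so $\kappa_{_{\leqslant l}}(n,n)\leqslant(1+o(1))n^{l/(l+1)}$. The real work is the lower bound, and my plan is to reformulate it as a hypergraph matching problem and then apply a nibble-type theorem.

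Fix $\varepsilon>0$ small, set $k=\lfloor(1-\varepsilon)n^{l/(l+1)}\rfloor$, and consider the auxiliary hypergraph $H=H_{n,k}$ with vertex set $V(H)=\binom{[n]}{l+1}$ and edge set $E(H)=\{e_K:K\in\binom{[n]}{k}\}$, where $e_K=\{T\in\binom{[n]}{l+1}:T\subseteq K\}$. Then $H$ is $\binom{k}{l+1}$-uniform, and a matching in $H$ corresponds bijectively to a collection of $k$-subsets of $[n]$ no two of which share an $(l+1)$-subset, i.e., a $k$-uniform $\{0,\ldots,l\}$-intersecting family. So the matching number of $H$ equals $\mu_{_{\leqslant l}}(n,k)$, and it suffices to prove this exceeds $n$.

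The hypergraph $H$ is regular of degree $D=\binom{n-l-1}{k-l-1}$, and its maximum codegree, attained by pairs of $(l+1)$-subsets sharing exactly $l$ elements, equals $\binom{n-l-2}{k-l-2}=\tfrac{k-l-1}{n-l-1}D=O(n^{-1/(l+1)})\cdot D$. In particular, codegree$/$degree$\to 0$. A form of the R\"odl nibble that permits growing edge-uniformity---for example the Kahn, Vu, or Alon--Kim--Spencer extensions of Pippenger--Spencer---should then yield a matching of size $(1-o(1))|V(H)|/\binom{k}{l+1}=(1-o(1))\binom{n}{l+1}/\binom{k}{l+1}=(1-o(1))n/(1-\varepsilon)^{l+1}$, which exceeds $n$ for $n$ large. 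Sending $\varepsilon\to 0$ slowly with $n$ gives the desired $\kappa_{_{\leqslant l}}(n,n)\geqslant(1-o(1))n^{l/(l+1)}$.

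The main obstacle is invoking the nibble in the regime where the edge-uniformity $\binom{k}{l+1}$ of $H$ grows with $n$, which lies outside the original Pippenger--Spencer setting; one must pick the right variant and verify its hypotheses carefully. A purely explicit construction looks harder: the polynomial-graph construction of Theorem \ref{ThPrime} only yields $k\sim\sqrt{n}$, and natural higher-dimensional analogues (hyperplanes, low-degree varieties, or affine flats in $\mathbb{F}_q^{l+1}$) have pairwise intersections of order at least $q^{l-1}$, far exceeding $l$ as soon as $l\geqslant 2$. As an alternative existence-theoretic route, recent design-theoretic results of Keevash or Glock--K\"uhn--Lo--Osthus on near-Steiner systems with growing block size might supply the packing directly.
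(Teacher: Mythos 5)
First, be aware that this statement is an open \emph{conjecture} in the paper, not a theorem: the authors prove only the upper bound (Remark 1, via Theorem \ref{thm: DEF theorem}, or equivalently Theorem \ref{thm: an upper bound for at most l}) and the case $l=1$ (Theorem \ref{thm: sqrt n}), and explicitly leave the lower bound for $l\geqslant 2$ open. Your upper-bound derivation is correct and is exactly the paper's Remark 1. The whole content of the conjecture is therefore the lower bound, and that is where your proposal has a genuine gap rather than a routine verification.

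Your reduction to a matching problem in the auxiliary hypergraph $H$ is sound, and the degree and codegree computations are correct. The fatal step is the appeal to ``a form of the R\"odl nibble.'' The uniformity of $H$ is $r=\binom{k}{l+1}\sim n^{l}/(l+1)!$ while $H$ has $N=\binom{n}{l+1}\sim n^{l+1}/(l+1)!$ vertices, so $r\sim N^{l/(l+1)}$ is a power of the number of vertices; none of the results you cite (Pippenger--Spencer, Alon--Kim--Spencer, Kahn, Vu) operates in this regime. More importantly, the hypotheses you actually verify --- regularity together with codegrees $o(D)$ --- \emph{cannot} imply a near-perfect matching once $r$ is polynomial in $N$: the line hypergraph of a projective plane of order $q$ is $(q+1)$-uniform and $(q+1)$-regular on $N=q^{2}+q+1$ vertices with every codegree equal to $1=o(D)$, yet its maximum matching has size $1$ rather than $(1-o(1))N/r\approx q$. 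So there is no ``right variant'' of the nibble to be found from these hypotheses alone; any proof would need additional structural input about this particular $H$. The fallback routes you mention do not close the gap either: the Keevash and Glock--K\"uhn--Lo--Osthus existence theorems concern blocks of fixed size. Note finally how rigid the sought object is for $l\geqslant 2$: two independent uniform random $k$-subsets of $[n]$ with $k\sim n^{l/(l+1)}$ typically meet in about $k^{2}/n\sim n^{(l-1)/(l+1)}\rightarrow\infty$ elements, so no perturbation of a random family can work, in sharp contrast with $l=1$ where the projective-plane construction behind Theorem \ref{ThPrime} and Theorem \ref{thm: sqrt n} is available. This is precisely why the statement is posed as a conjecture, and your proposal does not resolve it.
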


\noindent
\textbf{Remark 1.}
By Theorem \ref{thm: sqrt n}, we have $\lim_{n\rightarrow \infty}\frac{\kappa_{_{\leqslant 1}}(n,n)}{\sqrt{n}}=1$,
i.e., the conjecture holds for $l=1$.
Taking advantage of Theorem \ref{thm: DEF theorem}, we can obtain that
$$
\lim_{n\rightarrow \infty}\frac{\kappa_{_{\leqslant l}}(n,n)}{n^{l/(l+1)}}\leqslant 1.
$$
So it suffices to show that
$\lim_{n\rightarrow \infty}\frac{\kappa_{_{\leqslant l}}(n,n)}{n^{l/(l+1)}}\geqslant 1$ for $l\geqslant 2$.

\end{spacing}
\end{document}